\crefname{equation}{}{}
\Crefname{conjecture}{Conjecture}{Conjectures}
\newtheorem{theorem}{Theorem}[section]
\newtheorem{lemma}[theorem]{Lemma}
\newtheorem{corollary}[theorem]{Corollary}
\newtheorem{proposition}[theorem]{Proposition}
\newtheorem{conjecture}[theorem]{Conjecture}
\newtheorem{problem}[theorem]{Problem}
\theoremstyle{definition}
\newtheorem{definition}[theorem]{Definition}
\newtheorem{example}[theorem]{Example}
\theoremstyle{remark}
\newtheorem*{remark}{Remark}
\newtheorem{claim}{Claim}
\newenvironment{claimproof}[1][Proof]{\begin{proof}[#1]}{\end{proof}}
\DeclarePairedDelimiter\abs{\lvert}{\rvert}
\DeclarePairedDelimiter\floor{\lfloor}{\rfloor}
\DeclarePairedDelimiter\set{\{}{\}}
\newcommand{\R}{\mathbb R}
\newcommand{\N}{\mathbb N}
\newcommand{\T}{\intercal}
\DeclareMathOperator{\rnk}{rank}
\DeclareMathOperator{\tr}{tr}
\DeclareMathOperator{\rank}{rank}
\DeclareMathOperator{\mult}{mult}
\newcommand{\eps}{\varepsilon}
\newcommand{\wt}[1]{\widetilde{#1}}
\title[Spherical two-distance sets]{Spherical two-distance sets \\ and eigenvalues of signed graphs}
\author[Jiang]{Zilin Jiang}
\author[Tidor]{Jonathan Tidor}
\author[Yao]{Yuan Yao}
\author[Zhang]{Shengtong Zhang}
\author[Zhao]{Yufei Zhao}
\thanks{
Jiang was supported by an AMS Simons Travel Grant and NSF Award DMS-1953946.
Tidor was supported by the NSF Graduate Research Fellowship Program DGE-1745302.
Yao and Zhang were supported by MIT UROP.
Zhao was supported by NSF Award DMS-1764176, the MIT Solomon Buchsbaum Fund, and a Sloan Research Fellowship.
}
\address{Jiang: Arizona State University, Tempe, AZ, USA}
\email{zilinj@asu.edu}
\address{Tidor, Yao, Zhang, Zhao: Massachusetts Institute of Technology, Cambridge, MA, USA}
\email{\{jtidor,yyao1,stzh1555,yufeiz\}@mit.edu}
\begin{document}

\begin{abstract}
  We study the problem of determining the maximum size of a spherical two-distance set with two fixed angles (one acute and one obtuse) in high dimensions. Let $N_{\alpha,\beta}(d)$ denote the maximum number of unit vectors in $\mathbb R^d$ where all pairwise inner products lie in $\{\alpha,\beta\}$. For fixed $-1\leq\beta<0\leq\alpha<1$, we propose a conjecture for the limit of $N_{\alpha,\beta}(d)/d$ as $d \to \infty$ in terms of eigenvalue multiplicities of signed graphs. We determine this limit when $\alpha+2\beta<0$ or $(1-\alpha)/(\alpha-\beta) \in \{1, \sqrt{2}, \sqrt{3}\}$.

  Our work builds on our recent resolution of the problem in the case of $\alpha = -\beta$ (corresponding to equiangular lines). It is the first determination of $\lim_{d \to \infty} N_{\alpha,\beta}(d)/d$ for any nontrivial fixed values of $\alpha$ and $\beta$ outside of the equiangular lines setting.
\end{abstract}

\maketitle

\section{Introduction}

A set of unit vectors in $\R^d$ is a \emph{spherical two-distance set} if the inner products of distinct vectors only take two values.
The problem of determining the maximum size of spherical two-distance sets is a deep and natural problem in discrete geometry. Some of the earliest results in this area date to the seminal work of Delsarte, Goethals, and Seidel \cite{DGS77}. They prove that a spherical two-distance set in $\R^d$ has size at most $\tfrac12 d(d+3)$.
This bound is close to the truth, as taking the $\frac12 d(d+1)$ midpoints on the edges of a regular simplex form a spherical two-distance set in $\R^d$.
Recently Glazyrin and Yu \cite{GY18} determined that the maximum size of spherical two-distance sets in $\R^d$ is indeed $\frac12 d(d+1)$ whenever $d \ge 7$ and $d+3$ is not an odd perfect square; see \cite{M09,BY13,Y17} for results in many small dimensions.

Given a set $A \subset [-1,1)$, a \emph{spherical $A$-code} is a set $S$ of unit vectors in $\R^d$ where $\langle x,y\rangle \in A$ for all distinct $x,y \in S$.
We write $N_A(d)$ the maximum size of a spherical $A$-code in $\R^d$.
In this paper, we are primarily interested in the case $A = \{\alpha, \beta\}$ for fixed $-1 \le \beta < \alpha < 1$ and large $d$, in which case we write $N_{\alpha,\beta}(d)$ instead of $N_{\{\alpha,\beta\}}(d)$.

Let us briefly mention some early developments on this problem.
The special case $\alpha = -\beta$ corresponds to \emph{equiangular lines}, whose study in the setting of fixed angle in high dimensions began with the work of Lemmens and Seidel~\cite{LS73}.
For spherical two-distance sets with fixed angles, Neumaier~\cite[Corollary~5]{N81} showed that $N_{\alpha,\beta}(d)\leq 2d+1$ unless $(1-\alpha)/(\alpha - \beta)$ is an integer. Furthermore, a result of Larman, Rogers, and Seidel~\cite{LRS77} implies the growth rate $N_{\alpha,\beta}(d) = \Theta_{\alpha,\beta}(d^2)$ for all $0\leq\beta<\alpha<1$ such that $(1-\alpha)/(\alpha - \beta)$ is an integer.\footnote{Using Wilson's deep result~\cite{W75} on the existence of balanced incomplete block designs, Larman, Rogers and Seidel~\cite[Theorem~3]{LRS77} constructed a spherical $\set{0,1/(\lambda+1)}$-code $C_{\lambda}(d)$ of size $\Theta_\lambda(d^2)$ in $\R^d$ for any positive integer $\lambda$, from which one constructs a spherical $\set{\alpha, \beta}$-code $\{\sqrt{1-\beta}(\bm v, \sqrt{\beta/(1-\beta)})\colon \bm v \in C_{\lambda}(d)\}$ of size $\Theta_\lambda(d^2)$ in $\R^{d+1}$ for every $0 \le \beta < \alpha$ with $\lambda = (1-\alpha)/(\alpha-\beta)$ a positive integer.} The regime $-1\le\beta<\alpha<0$ is less interesting, as an easy argument shows that $N_{[-1,\alpha]}(d) \le 1 - 1/\alpha$ for all $\alpha < 0$.

Recently work \cite{B16,BDKS18,JP20} culminated in a solution~\cite{JTYZZ21} to the problem of determining the maximum number of equiangular lines with fixed angles in high dimensions.
The papers~\cite{B16,BDKS18} also address the more general problem of estimating $N_{[-1,\beta]\cup\{\alpha_1, \dots, \alpha_k\}}(d)$ for fixed $\beta< 0 < \alpha_1 < \cdots < \alpha_k$.
In particular, Bukh \cite{B16} showed that $N_{[-1,\beta]\cup\{\alpha\}}(d)  = O_\beta(d)$, in sharp contrast to the quadratic dependence in dimension without angle restrictions.
Significant progress was made by Balla, Dr\"{a}xler, Keevash, and Sudakov \cite{BDKS18}, whose results in particular imply the bound $N_{\alpha,\beta}(d) \le 2(1-\alpha/\beta+o(1))d$.
More generally, it was conjectured in \cite{B16} and proved in \cite[Theorem 1.4]{BDKS18} that $N_{[-1,\beta]\cup\{\alpha_1, \dots, \alpha_k\}} = O_{k,\beta}(d^k)$, and that there exist choices of $\alpha_1, \dots, \alpha_k, \beta$ for which this upper bound is tight up to a constant factor. Here subscripts in the asymptotic notation indicate that  hidden constants may depend on these parameters.

We focus our attention on the goal of sharpening the above results for spherical two-distance sets and obtaining tight asymptotics for their maximum sizes.

\begin{problem} \label{prob:main}
  Determine, for fixed $-1\le\beta<0\le\alpha<1$, and large $d$, the maximum number, denoted $N_{\alpha,\beta}(d)$, of unit vectors in $\R^d$ whose pairwise inner products lie in $\{\alpha, \beta\}$. In particular determine the limit of $N_{\alpha,\beta}(d)/d$ as $d \to\infty$.
\end{problem}

We recently solved \cref{prob:main} in the case of equiangular lines \cite{JTYZZ21} where $\beta = -\alpha$. To state the result, we need the following spectral graph quantity, introduced in~\cite{JP20}.

\begin{definition} \label{def:k(lambda)}
  The \emph{spectral radius order}, denoted $k(\lambda)$, of a real number $\lambda > 0$ is  the smallest integer $k$ so that there exists a $k$-vertex graph $G$ whose spectral radius $\lambda_1(G)$ is exactly $\lambda$. Set $k(\lambda) = \infty$ if no such graph exists. (When we talk about the spectral radius or eigenvalues of a graph we always refer to its adjacency matrix.)
\end{definition}

\begin{theorem}[Equiangular lines with a fixed angle \cite{JTYZZ21}] \label{thm:equiangular}
  Fix $\alpha \in (0,1)$.
  Let $\lambda= (1-\alpha)/(2\alpha)$.
  For all sufficiently large $d > d_0(\alpha)$,
  \[
  N_{\alpha,-\alpha}(d) = \begin{cases}
    \displaystyle \floor*{\frac{k(\lambda) (d-1)}{k(\lambda)-1}} \qquad & \text{if } k(\lambda) < \infty, \\
    d + o(d) & \text{otherwise.}
  \end{cases}
  \]
\end{theorem}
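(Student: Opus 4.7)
My plan is to reformulate the equiangular-lines problem as a spectral question about signed graphs and then split the argument into a construction and a matching upper bound.

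\textbf{Translation to signed graphs.} Choose unit vectors $v_1,\dots,v_N$ representing the $N$ equiangular lines. The matrix $\Sigma$ with $\Sigma_{ii}=0$ and $\Sigma_{ij}=\operatorname{sign}(\langle v_i,v_j\rangle)$ is the adjacency matrix of a signed complete graph, well-defined up to switching equivalence since we may replace any $v_i$ by $-v_i$. Its Gram matrix $I+\alpha\Sigma$ is positive semidefinite of rank at most $d$, so the smallest eigenvalue of $\Sigma$ is at least $-1/\alpha=-(2\lambda+1)$, and $-(2\lambda+1)$ has multiplicity at least $N-d$. The task becomes: maximize $N$ such that some signed graph on $N$ vertices has $-(2\lambda+1)$ as its smallest eigenvalue with multiplicity at least $N-d$.

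\textbf{Construction.} For the lower bound when $k=k(\lambda)<\infty$, fix a $k$-vertex graph $H$ with $\lambda_1(H)=\lambda$ and Perron eigenvector $x>0$. I would partition $N=\lfloor k(d-1)/(k-1)\rfloor$ vertices into $k$ nearly equal blocks indexed by $V(H)$, sign the edges between distinct blocks $i$ and $j$ according to whether $ij\in E(H)$, and choose the intra-block signs so that $-(2\lambda+1)$ appears as an eigenvalue with multiplicity exactly $N-d$. The spectrum can be verified by writing down $N-d$ explicit eigenvectors supported on single blocks (summing to zero within each block) together with $d$ further eigenvectors built from $x$.

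\textbf{Upper bound and the main obstacle.} The heart of the argument is a spectral forbidden-subgraph theorem for signed graphs: if a signed graph $\Sigma$ on $N$ vertices has $-\mu$ as an eigenvalue of multiplicity at least $\eps N$, then after some switching $\Sigma$ contains an induced ordinary (all-positive) subgraph $G$ on at most $C(\mu,\eps)$ vertices with $\lambda_1(G)=\mu$. Granting this, when $k(\lambda)=\infty$ no such $G$ exists, forcing the multiplicity to be $o(N)$ and yielding $N=d+o(d)$. When $k(\lambda)=k<\infty$ the extracted $G$ satisfies $|V(G)|\ge k$, and a weighted Cauchy-interlacing argument using the Perron weights of $G$ then gives $N\le |V(G)|(d-1)/(|V(G)|-1)\le k(d-1)/(k-1)$ because the right-hand side is decreasing in $|V(G)|$.

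\textbf{Attacking the forbidden-subgraph theorem.} This is the delicate step. I would adapt the localization strategy of Jiang--Polyanskii to the signed setting: restrict the $-\mu$-eigenspace to a random constant-size vertex subset, average over switchings to control the sign pattern on a typical piece, and then invoke Perron--Frobenius to certify that a positive subgraph produced in this way has spectral radius exactly $\mu$. Handling switching equivalence carefully and producing a subgraph of truly \emph{bounded} (not merely $o(N)$) size are the main hurdles; once this is done, the rest of the argument is linear-algebraic bookkeeping.
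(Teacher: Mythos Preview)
The approach you outline does not match the paper's and, more importantly, the central ``forbidden-subgraph theorem'' you propose is not the right lemma and I do not see how to make it true or useful. Your statement says: if a signed complete graph on $N$ vertices has $-\mu$ as an eigenvalue of multiplicity $\ge \eps N$, then after switching it contains a bounded-size all-positive induced subgraph $G$ with $\lambda_1(G)=\mu$. But an induced subgraph of a signed $K_N$ is again a signed complete graph, and once you switch it to all-positive its spectral radius is just the number of vertices minus one, not $\mu$. Even reinterpreting ``induced'' charitably, there is no reason the eigenspace should localize onto a bounded patch; in fact the paper exhibits (Section~7) bounded-degree signed graphs whose top eigenvalue has \emph{linear} multiplicity with no such local witness. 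And your final interlacing step, deducing $N\le |V(G)|(d-1)/(|V(G)|-1)$ from the mere existence of a small subgraph $G$ with $\lambda_1(G)=\mu$, is not justified: Cauchy interlacing only tells you one eigenvalue of the big graph lies above $-\mu$, not that the multiplicity of $-\mu$ is small.

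The actual argument (the $p\le 2$ case of Theorem~1.6 in this paper, which specializes to the equiangular result) has two completely different ingredients that you are missing. First, a \emph{structural theorem} (Theorem~3.1 here, building on Balla--Dr\"axler--Keevash--Sudakov): after deleting $O(1)$ vertices the associated graph is a $\Delta$-modification of a complete $t$-partite graph with $t\le p$, so subtracting that multipartite graph leaves a signed graph $G^\pm$ of \emph{bounded maximum degree} and $\chi(G^\pm)\le p$. For $p\le 2$ this $G^\pm$ is isospectral with its unsigned underlying graph. Second, the key analytic input is not a localization/forbidden-subgraph lemma at all but the \emph{sublinear eigenvalue multiplicity theorem} for bounded-degree connected unsigned graphs (Theorem~1.10 here, the main result of \cite{JTYZZ19}): the $j$th eigenvalue multiplicity is $O(n/\log\log n)$. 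Combined with Perron--Frobenius on components whose top eigenvalue equals $\lambda$ (each contributing exactly one to $\mlt(\lambda,\cdot)$ and having at least $k(\lambda)$ vertices), this gives $\mlt(\lambda,G)\le N/k(\lambda)+O(1)$ and hence the sharp bound. Your sketch invokes Jiang--Polyanskii localization, but that earlier work gives only weaker bounds; the decisive new idea in \cite{JTYZZ19} is the walk-counting proof of sublinear multiplicity, and your plan does not touch it.
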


Let us recap some key steps in the proof of the upper bound on $N_{\alpha,-\alpha}(d)$ in \cref{thm:equiangular}.
We will build on this framework.

Given a spherical $\set{\pm\alpha}$-code $S$, we consider the \emph{associated graph} $G$ with vertex set $S$, where $x,y \in S$ are adjacent in $G$ if $x \cdot y = -\alpha$.
We are allowed to replace any $x \in S$ by $-x$ without changing the equiangular lines configuration.
An argument introduced in \cite{BDKS18} reduces the problem to bounded degree graphs.

\begin{theorem}[{\cite{BDKS18} and \cite[Theorem 2.1]{JTYZZ21}}]
  For every $\alpha \in (0,1)$, there exists $\Delta$ depending only on $\alpha$, such that given any spherical $\set{\pm\alpha}$-code $S$ in $\R^n$, one can replace some subset of vectors in $S$ by their negations so that the associated graph $G$ (as defined above) has maximum degree at most $\Delta$.
\end{theorem}

The problem of bounding the size of $S$ is related to the multiplicity of $(1-\alpha)/(2\alpha)$ as the second largest eigenvalue of the adjacency matrix of $G$.
A crucial contribution of \cite{JTYZZ21} is that every connected bounded degree graph has sublinear second eigenvalue multiplicity. More generally, we have the following. (See \cref{def:signed-eig} below for the precise definition of $j$-th eigenvalue multiplicity.)

\begin{theorem}[{\cite[Theorem~2.2]{JTYZZ21}}] \label{thm:kth-ev-mult}
  For every $j$ and $\Delta$, there is a constant $C = C(\Delta,j)$ so that every connected $n$-vertex graph with maximum degree at most $\Delta$ has $j$-th eigenvalue multiplicity at most $Cn/\log\log n$.
\end{theorem}

Turning to spherical two-distance sets, 
given a spherical $\set{\alpha,\beta}$-code $S$ 
(with $\beta < 0 \le \alpha$ as always throughout this paper), 
we define its \emph{associated graph} $G$ to have vertex set $S$ and where $x,y \in S$ are adjacent in $G$ if $x \cdot y = \beta$. 
Unlike for equiangular lines, 
here we are no longer allowed to negate a subset of vectors in a spherical $\set{\alpha,\beta}$-code. 
Instead, we show that $G$ is very close to a complete $p$-partite graph.
Here $p$ is a specific constant, with the equiangular lines problem corresponding to $p=2$.

\begin{definition}
  A graph $G$ is a \emph{$\Delta$-modification} of another graph $H$ on the same vertex set if the symmetric difference of $G$ and $H$ has maximum degree at most $\Delta$.
\end{definition}

\begin{theorem} \label{thm:structure}
  For every $-1 \le \beta < 0 \le \alpha < 1$, there exists $\Delta$ depending only on $\alpha$ and $\beta$ such that for every spherical $\set{\alpha,\beta}$-code, its associated graph $G$ (as defined above), after removing at most $\Delta$ vertices, is a $\Delta$-modification of a complete $p$-partite graph, where $p = \floor{-\alpha/\beta}+1$.
\end{theorem}

\begin{remark}
  We allow empty parts in a complete $p$-partite graph. In particular, a complete $t$-partite graph is always a complete $p$-partite graph for $t \le p$.
\end{remark}

It will be helpful to study such graphs using the language of signed graphs.

\begin{definition} \label{def:signed-eig}
A \emph{signed graph} is a graph whose edges are each labeled by $+$ or $-$.
Throughout the paper we decorate variables for signed graphs with the $\pm$ superscript.
The \emph{signed adjacency matrix} $A_{G^\pm}$ of a signed graph $G^\pm$ on $n$ vertices is the $n\times n$ matrix whose $(i,j)$-th entry is $1$ if $ij$ is a positive edge, and $-1$ if $ij$ is a negative edge, and $0$ otherwise. We denote the eigenvalues of $A_{G^\pm}$ by $\lambda_1(G^\pm) \ge \lambda_2(G^\pm) \ge \cdots \ge \lambda_n(G^\pm)$. We write 
\[
\mult(\lambda,G^\pm) = \abs{\set{i: \lambda_i(G^\pm) = \lambda}}
\]
for the the multiplicity of $\lambda$ as an eigenvalue of $G^\pm$. 
The \emph{$j$-th eigenvalue multiplicity} of $G^\pm$ is defined to be $\mult(\lambda_j(G^\pm),G^\pm)$. 
We use $\abs{G}$ and $\abs{G^\pm}$ to denote the number of vertices in the graph.
\end{definition}

Given a $\Delta$-modification $G$ of a complete $p$-partite graph $K$, 
we study the signed graph $G^\pm$ defined by $A_{G^\pm} = A_G - A_{K}$.
The growth rate of $N_{\alpha,\beta}(d)$ is related to the eigenvalue multiplicity of $G^\pm$.
We introduce the following parameter generalizing the spectral radius order $k(\lambda)$ for signed graphs.

\begin{definition}
  A \emph{valid $p$-coloring} of a signed graph $G^\pm$ is a coloring of the vertices using $p$ colors such that the endpoints of every negative edge are colored using distinct colors, and the endpoints of every positive edge are colored using identical colors. (See \cref{fig:coloring} for an example.)
  The \emph{chromatic number} $\chi(G^\pm)$ of a signed graph $G^\pm$ is the smallest $p$ for which $G^\pm$ has a valid $p$-coloring. If $G^\pm$ does not have a valid $p$-coloring for any $p$, we write $\chi(G^\pm) = \infty$.
\end{definition}

\begin{definition}
  Given $\lambda > 0$ and $p \in \N$, define the parameter
  \[
    k_p(\lambda)=\inf\left\{\frac{\abs{G^\pm}}{\mult(\lambda, G^\pm)}\colon \chi(G^\pm) \le p \text{ and }\lambda_1(G^\pm) = \lambda\right\}.
  \]
  We say that $k_p(\lambda)$ is \emph{achievable} if it is finite and the infimum can be attained.
\end{definition}

\begin{figure}[t]
  \centering
  \begin{tikzpicture}[very thick,scale=.8]
    \coordinate (a) at (0.922649731,3);
    \coordinate (b) at (2.077350269,3);
    \coordinate (c) at (-0.577350269,1);
    \coordinate (d) at (0.577350269,1);
    \coordinate (e) at (0,0);
    \coordinate (f) at (2.422649731,1);
    \coordinate (g) at (3.577350269,1);
    \coordinate (h) at (3,0);
    \draw [fill=red!30,draw=none] (0,0.65) circle[radius=1];
    \draw [fill=blue!30,draw=none] (3,0.65) circle[radius=1];
    \draw [fill=green!30,draw=none] (1.5,3) circle[radius=1];
    \draw (a) -- (b);
    \draw (c) -- (e) -- (d);
    \draw (f) -- (g) -- (h) -- cycle;
    \draw[dashed] (c) -- (a) -- (d);
    \draw[dashed] (g) -- (b) -- (f);
    \draw[dashed] (d) -- (h);
    \draw[dashed] (e) -- (f);
    \draw[fill=black, draw] (1,3) circle[radius=.075];
    \draw[fill=black, draw] (2,3) circle[radius=.075];
    \draw[fill=black, draw] (-0.5774,1) circle[radius=.075];
    \draw[fill=black, draw] (0.5774,1) circle[radius=.075];
    \draw[fill=black, draw] (0,0) circle[radius=.075];
    \draw[fill=black, draw] (2.4226,1) circle[radius=.075];
    \draw[fill=black, draw] (3.5774,1) circle[radius=.075];
    \draw[fill=black, draw] (3,0) circle[radius=.075];
  \end{tikzpicture}
  \caption{A valid $3$-coloring of a signed graph. Throughout this paper, the positive edges are represented by solid segments and the negative edges are represented by dashed segments.}
  \label{fig:coloring}
\end{figure}
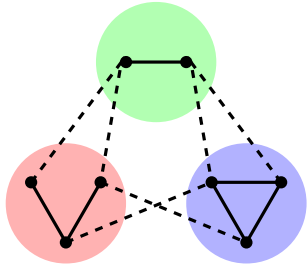

In the definition of $k_p(G^\pm)$, it is enough to consider connected $G^\pm$, since the eigenvalues of $G^\pm$ are given by the union of the the eigenvalues of its connected components.

If $\chi(G^\pm) \le 2$, then the signed graph $G^\pm$ and its underlying graph $G$ have the same eigenvalues (including multiplicities), since the signed adjacency matrix of $G^\pm$ can be obtained by conjugating the adjacency matrix of $G$ by a $\set{\pm 1}$-valued diagonal matrix.
By the Perron--Frobenius theorem, the top eigenvalue of a connected unsigned graph has multiplicity one. Thus,
\[
k_1(\lambda)=k_2(\lambda) = k(\lambda) \qquad \text{for all } \lambda > 0.
\]
However the behavior of $k_p(\lambda)$ is far more mysterious when $p \ge 3$. We do not know any general method of estimating or certifying values of $k_p(\lambda)$. Also, it is not even clear whether the infimum in the definition of $k_p(\lambda)$ can always be attained whenever $k_p(\lambda)$ is finite.

Generalizing the construction in \cite{JP20} relating equiangular lines to $k(\lambda)$, we can obtain a lower bound on $\lim_{d\to\infty}N_{\alpha,\beta}(d)/d$ (see \cref{prop:lower-bound}). Our main conjecture, below, says that this lower bound is sharp.

\begin{conjecture} \label{conj:main}
  Fix $-1\le\beta<0\le\alpha<1$.
  Set $\lambda = (1-\alpha)/(\alpha-\beta)$ and $p = \floor{-\alpha/\beta} + 1$. Then
  \[
    \lim_{d \to \infty}
    \frac{N_{\alpha,\beta}(d)}{d} = \begin{cases}
      \displaystyle \frac{k_p(\lambda) }{k_p(\lambda)-1} \qquad & \text{if }k_p(\lambda) < \infty, \\
      1 & \text{otherwise}.
    \end{cases}
  \]
\end{conjecture}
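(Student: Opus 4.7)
Since \cref{conj:main} is a conjecture (and the paper only establishes it in the cases $\alpha + 2\beta < 0$ and $\lambda \in \set{1, \sqrt{2}, \sqrt{3}}$), what follows is a plan for the matching upper bound to \cref{prop:lower-bound}: for a spherical $\set{\alpha,\beta}$-code $v_1, \dots, v_n \subset \R^d$ with $n$ and $d$ large, show $n \le k_p(\lambda) d / (k_p(\lambda) - 1) + o(d)$.

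The starting point is a Gram matrix decomposition. Let $A$ denote the $\{0,1\}$ adjacency matrix of the graph on $[n]$ recording the pairs with inner product $\beta$. Then
\[
M = (1 - \alpha) I + (\beta - \alpha) A + \alpha J
\]
is positive semidefinite and has rank at most $d$. Any eigenvector $x$ of $A$ orthogonal to $\mathbf{1}$ with eigenvalue $\mu$ satisfies $Mx = ((1-\alpha) + (\beta - \alpha)\mu) x$, so positive semidefiniteness forces $\mu \le \lambda$, and the rank constraint then forces at least $n - d - 1$ such eigenvalues to equal $\lambda$ exactly. In short, $\lambda$ is the largest eigenvalue of $A$ on $\mathbf{1}^\perp$, with multiplicity at least $n - d - 1$. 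To connect this to $k_p(\lambda)$, I would next extract from the configuration a signed graph $H^\pm$ with $\lambda_1(H^\pm) = \lambda$ and $\chi(H^\pm) \le p$ via a pillar/quotient decomposition: partition $\set{v_1, \dots, v_n}$ into clusters according to their inner-product profiles with a small anchor set, then define $H^\pm$ on the set of cluster labels with edge signs recording the dominant cross-cluster inner product. The chromatic bound $\chi(H^\pm) \le p = \floor{-\alpha/\beta} + 1$ would follow from Gram positivity: if $p + 1$ clusters of common large size $s$ were pairwise $\beta$-connected with internal inner products $\alpha$, then the sum $W$ of their cluster sums would satisfy $\|W\|^2 \approx (p + 1) s^2 (\alpha + p\beta) < 0$ (using $p > -\alpha/\beta$), contradicting positivity.

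The last step is the upper bound $\mlt(\lambda, H^\pm) \le |H^\pm|/k_p(\lambda) + o(|H^\pm|)$, which combined with the spectral input would yield the desired bound on $n$ (via the count $n \lesssim |H^\pm|\cdot(\text{max cluster size})$). For \emph{unsigned} bounded-degree connected graphs, this is the content of the main technical theorem of \cite{JTYZZ19} on sublinearity of the second eigenvalue multiplicity. Extending it to signed graphs with $\chi(H^\pm) \le p$ is the principal obstacle: the unsigned proof relies on positivity of the Perron eigenvector, for which there is no direct analogue once $\chi \ge 3$. When $\chi \le 2$ a signed graph is switching-equivalent to an unsigned one, so the method of \cite{JTYZZ19} transfers directly and handles the case $\alpha + 2\beta < 0$; and for $\lambda \in \set{1, \sqrt{2}, \sqrt{3}}$, structural classifications of signed graphs with $\lambda_1(H^\pm) = \lambda$ render $k_p(\lambda)$ explicit and the upper bound amenable to ad-hoc analysis. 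A full proof of the conjecture appears to require a genuinely new eigenvalue multiplicity tool for signed graphs of bounded chromatic number.
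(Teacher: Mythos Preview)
Your overall outline matches the paper's framework: encode the code by its Gram matrix, pass to an associated signed graph with chromatic number at most $p$, and then seek an eigenvalue-multiplicity bound of the form $\mlt(\lambda,\cdot)\le |\cdot|/k_p(\lambda)+o(|\cdot|)$. You also correctly identify the main obstruction, namely that the Perron--Frobenius step from \cite{JTYZZ19} has no analogue once $\chi\ge 3$.

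However, your middle step --- the construction of $H^\pm$ --- is not the paper's, and as written it does not close. You propose to partition the vectors into clusters by their profile against an anchor set and then define $H^\pm$ \emph{on the set of cluster labels}. Your chromatic-number argument (ruling out $p+1$ large pairwise-$\beta$ clusters) is really the argument that there are at most $p$ clusters, so $|H^\pm|\le p$; the multiplicity inequality $\mlt(\lambda,H^\pm)\le |H^\pm|/k_p(\lambda)$ on such a tiny graph carries no information, and the final bookkeeping ``$n\lesssim |H^\pm|\cdot(\text{max cluster size})$'' is vacuous. The paper instead keeps (almost) all $n$ vertices: it proves a structure theorem (\cref{thm:struct-strong}) showing that after deleting $O_{\alpha,\beta}(1)$ vertices the associated graph $G$ is a bounded-degree modification of a complete $t$-partite graph $G'$ with $t\le p$, and then sets $A_{G^\pm}=A_G-A_{G'}$ on this full vertex set. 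The $t$-partition is the valid coloring, giving $\chi(G^\pm)\le p$, and the rank hypothesis translates directly into $\mlt(\lambda,G^\pm)\ge n-d-O(1)$; this is what makes the multiplicity bound bite.

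One further point you omit: the paper does not conjecture the multiplicity bound for \emph{all} bounded-degree signed graphs with $\chi\le p$, and in fact \cref{thm:large-mult-exmaple} exhibits connected $3$-chromatic signed graphs of bounded degree whose top eigenvalue has linear multiplicity. The paper's formulation (\cref{conj:forbidden}) therefore also forbids a finite family $\mathcal H$ of induced signed subgraphs with $\lambda_1>\lambda$, which the structure theorem guarantees can be arranged; this extra hypothesis is what makes the conjectured multiplicity bound plausible and is essential in the $(\lambda,p)=(\sqrt3,3)$ case.
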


We see above that the parameters
\[
\lambda = \frac{1-\alpha}{\alpha-\beta}
\qquad\text{and}\qquad
p = \left\lfloor - \frac{\alpha}{\beta} \right\rfloor +1
\]
appear to play important roles in the problem.
These two parameters $\lambda$ and $p$ conjecturally govern the asymptotic behavior of $N_{\alpha,\beta}(d)$.
Our main theorem below establishes \cref{conj:main} for $p \le 2$, as well as for $\lambda \in \{1, \sqrt{2}, \sqrt{3}\}$.
This is the first time that some $\lim_{d \to \infty} N_{\alpha,\beta}(d)/d$ is determined outside of the equiangular lines setting ($\alpha = -\beta$).

\begin{theorem} \label{thm:main}
  Fix $-1\le\beta<0\le\alpha<1$.
  Set $\lambda = (1-\alpha)/(\alpha-\beta)$ and $p = \floor{-\alpha/\beta}+1$.
  \begin{enumerate}[label=(\alph*)]
    \item If $p \le 2$, then the maximum size $N_{\alpha,\beta}(d)$ of a spherical $\set{\alpha, \beta}$-code in $\R^d$ satisfies
    \[
    N_{\alpha,\beta}(d) = \begin{cases}
      \displaystyle \frac{k(\lambda) d}{k(\lambda)-1} + O_{\alpha,\beta}(1) \qquad & \text{if }k(\lambda) < \infty, \\
      d + o(d) & \text{otherwise}.
    \end{cases}
    \] \label{thm:main-p}
    \item If $\lambda = 1$ and $p \ge 2$, then $k_p(1) = p/(p-1)$ and $N_{\alpha,\beta}(d) = pd + O_{\alpha,\beta}(1)$. \label{thm:main-lambda-1}
    \item If $\lambda = \sqrt{3}$ and $p = 3$, then $k_3(\sqrt{3}) = 7/3$ and $N_{\alpha,\beta}(d) = 7d/4 + O_{\alpha,\beta}(1)$. \label{thm:main-triangle-counting}
    \item If $\lambda \in \set{\sqrt{2}, \sqrt{3}}$ and $p \ge \lambda^2 + 1$, then $k_p(\lambda) = 2$ and $N_{\alpha,\beta}(d) = 2d + O_{\alpha,\beta}(1)$. \label{thm:main-lambda-2-3}
  \end{enumerate}
  Moreover, $k_p(\lambda)$ is achievable for every $\lambda \in \{1,\sqrt{2}, \sqrt{3}\}$ and $p \in \N$.
\end{theorem}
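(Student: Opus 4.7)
Granting \cref{prop:lower-bound}, each lower bound reduces to exhibiting a signed graph $G^\pm$ attaining the claimed value of $k_p(\lambda)$, which simultaneously establishes achievability. For (b), the signed graph $K_p$ with every edge negative works: its signed adjacency matrix is $I - J$, which has eigenvalue $1-p$ with multiplicity $1$ and eigenvalue $1$ with multiplicity $p-1$, so $\lambda_1 = 1$, $\chi = p$, and the ratio is $p/(p-1)$. For (c) and (d), I will exhibit explicit small signed graphs --- for (c), one on $7$ vertices realizing $\lambda_1 = \sqrt 3$ with $\mlt(\sqrt 3) = 3$ and $\chi = 3$; for (d), signed graphs whose spectrum is symmetric with $\lambda_1 = \lambda$ of multiplicity $\abs{G^\pm}/2$ and $\chi \le p$.

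\textbf{Upper bound skeleton.} Given a spherical $\set{\alpha,\beta}$-code $S$ of size $n$ in $\R^d$, the Gram matrix factors as
\[
M = (1-\alpha) I + \alpha J - (\alpha-\beta) A^-,
\]
where $A^-$ is the $0/1$ matrix indicating pairs with inner product $\beta$. Since $M\succeq 0$ and $\rnk M \le d$, any vector in $\ker M \cap \mathbf 1^\perp$ --- a subspace of dimension $\ge n - d - 1$ --- is a $\lambda$-eigenvector of $A^-$, and positive semi-definiteness of $M$ forces the restriction of $A^-$ to $\mathbf 1^\perp$ to have spectral radius at most $\lambda$. Following the strategy of \cite{JTYZZ19}, after discarding $O_{\alpha,\beta}(1)$ exceptional vectors, a Neumaier-type pillar decomposition partitions $S$ into at most $p$ clusters; placing positive edges within clusters and retaining negative edges across clusters produces a signed graph $G^\pm$ with $\chi(G^\pm) \le p$, $\lambda_1(G^\pm) \le \lambda$, and $\mlt(\lambda, G^\pm) \ge n - d - O(1)$. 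The definition of $k_p(\lambda)$ then gives $\mlt(\lambda, G^\pm) \le \abs{G^\pm}/k_p(\lambda) \le n/k_p(\lambda)$, from which $n \le k_p(\lambda) d / (k_p(\lambda)-1) + O(1)$ follows.

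\textbf{Case analysis and main obstacle.} Part (a) is the most direct: $p \le 2$ means $\chi(G^\pm) \le 2$, in which case switching turns $G^\pm$ into an unsigned graph and one invokes the sublinear second eigenvalue multiplicity bound of \cite{JTYZZ19} to conclude $k_p(\lambda) = k(\lambda)$. For (b)--(d), the crux is the matching lower bound on $k_p(\lambda)$. For (b), I expect the claim $k_p(1) \ge p/(p-1)$ to follow from a structural decomposition of signed graphs with $\lambda_1 = 1$, controlled by the chromatic constraint. For (c), the ratio $7/3$ points to a trace inequality involving $\tr(A_{G^\pm}^3)$ --- a signed triangle count --- combined with the $3$-coloring constraint to force $\abs{G^\pm}/\mlt(\sqrt 3, G^\pm) \ge 7/3$. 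For (d), the hypothesis $p \ge \lambda^2 + 1$ matches the trivial bound $\tr(A_{G^\pm}^2)/\abs{G^\pm} \le \lambda^2$, suggesting an edge-counting argument paired with the coloring constraint to yield ratio $2$. The main obstacle throughout is the absence of a Perron--Frobenius theorem for signed graphs: tools universally available in the unsigned setting (e.g., a unique top eigenvector) fail here, so each case demands bespoke combinatorial-spectral analysis rather than a uniform argument.
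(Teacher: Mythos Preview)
The central gap is in your upper bound skeleton: the structural step does \emph{not} produce a signed graph with $\lambda_1(G^\pm) \le \lambda$. What the pillar decomposition actually yields (and what the paper proves in \cref{thm:struct-strong} and the proof of \cref{thm:signed-mult-upper}) is that $\lambda I - A_{G^\pm} + (\mu J - A_{G'}) \succeq 0$, where $G'$ is the complete $t$-partite graph on the clusters; since $\rnk(\mu J - A_{G'}) \le p$, Courant--Fischer gives only $\lambda_{p+1}(G^\pm) \le \lambda$. The top $p$ eigenvalues of $G^\pm$ may exceed $\lambda$, so the definition of $k_p(\lambda)$ --- which is an infimum over signed graphs with $\lambda_1(G^\pm) = \lambda$ --- cannot be invoked to bound $\mlt(\lambda, G^\pm)$. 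If your skeleton worked as written, \cref{conj:main} would follow immediately from the definition of $k_p(\lambda)$ and no case analysis would be needed; the whole point of the paper's framework (\cref{conj:forbidden}) is that one must instead bound $\mlt(\lambda, G^\pm)$ over the strictly larger class $\{\chi \le p,\ \lambda_{p+1} \le \lambda,\ \text{certain induced subgraphs forbidden}\}$, and this is what each case of \cref{thm:main} does separately.

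Your case-by-case plans also diverge from the paper in two places. For (d), the paper's argument is purely algebraic and does not use the coloring constraint or any edge count: since $\sqrt 2$ and $\sqrt 3$ are algebraic integers of degree $2$, every Galois conjugate of $\lambda$ appears as an eigenvalue of any integer matrix with the same multiplicity, hence $\mlt(\lambda, G^\pm) \le \abs{G^\pm}/\deg(\lambda) = \abs{G^\pm}/2$ for \emph{every} signed graph, immediately giving $N_{\alpha,\beta}(d) \le 2(d+1)$. For (b), the paper does not decompose signed graphs with $\lambda_1 = 1$; instead it takes the two largest color classes (together carrying at least $2\abs{G^\pm}/p$ vertices), notes that the induced signed subgraph has chromatic number at most $2$ and hence is isospectral to its underlying graph, applies the unsigned bound $\mlt(1,\cdot) \le \abs{\cdot}/k(1) = \abs{\cdot}/2$, and transfers this back to $G^\pm$ via Cauchy interlacing. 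Your intuition for (c) --- a third moment / signed triangle count --- is the right idea and matches the paper's approach, though the actual argument (\cref{prop:k3(sqrt3)}) requires substantial additional local structural analysis beyond the trace identity.
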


\begin{remark}
  The conditions on $\lambda$ and $p$ in \cref{thm:main} can be directly translated to ones on $\alpha$ and $\beta$. The condition $p \le 2$ in \ref{thm:main-p} amounts to $\alpha + 2\beta < 0$, which includes the special case $\alpha = -\beta$ for equiangular lines. The conditions in both \ref{thm:main-lambda-1} and \ref{thm:main-lambda-2-3} amount to $(\lambda+1)\alpha - \lambda\beta = 1$ and $\lambda/(\lambda^2 +\lambda +1) \le \alpha < 1/(\lambda+1)$, where $\lambda \in \set{1,\sqrt{2},\sqrt{3}}$. For example, $(\alpha, \beta) = (2/5, -1/5)$ satisfies the last two conditions for $\lambda = 1$, yielding $N_{2/5,-1/5}(d) = 3d + O(1)$. It is worth contrasting the last example to the universal equiangular lines bound $N_{\alpha,-\alpha}(d) \le 2d + O_\alpha(1)$ for all fixed $\alpha > 0$ (implied by \cref{thm:equiangular}, but proved initially in \cite{BDKS18}). Lastly the condition in \ref{thm:main-triangle-counting} amounts to $(\sqrt{2}+1)\alpha-\sqrt 2\beta = 1$ and $2/(3\sqrt{2}+2)\le \alpha < 3/(4\sqrt{2}+3)$.
\end{remark}

We also prove a general upper bound on $N_{\alpha,\beta}(d)$, though it is not expected to be tight except for special values (e.g., it implies  \cref{thm:main}\ref{thm:main-p}\ref{thm:main-lambda-1}).

\begin{theorem} \label{thm:upper-bound}
  Fix $-1 \le \beta < 0 \le \alpha < 1$. Set $\lambda = (1-\alpha)/(\alpha-\beta)$ and $p = \floor{- \alpha / \beta} + 1$ and $q = \max\set{1, p/2}$. Then
  \[
    N_{\alpha,\beta}(d) \le \begin{cases}
      \displaystyle \frac{q k(\lambda) d}{k(\lambda) - 1} + O_{\alpha,\beta}(1) \qquad & \text{if }k(\lambda) < \infty, \\
      q d + o(d) & \text{otherwise}.
    \end{cases}
  \]
\end{theorem}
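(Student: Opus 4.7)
The plan is to translate the code into a spectral condition on the $\beta$-graph of the code, then bound the multiplicity of $\lambda$ via a connected-component decomposition.

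\emph{Step 1: Spectral reduction.} Let $H$ be the graph on the code vectors whose edges are the pairs with inner product $\beta$, and let $A$ denote its adjacency matrix. From the Gram matrix identity
\[ M = (1-\alpha)I + \alpha J - (\alpha-\beta)A, \]
together with $M \succeq 0$ and $\rnk M \le d$, restricting to $\mathbf{1}^\perp$ gives $A \preceq \lambda I$ there and $\mlt(\lambda, A|_{\mathbf{1}^\perp}) \ge N - d - 1$. Cauchy interlacing then yields $\lambda_2(H) \le \lambda$ and $\mlt(\lambda, H) \ge N - d - O(1)$.

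\emph{Step 2: Component decomposition.} Decompose $H = C_1 \sqcup \cdots \sqcup C_m$ into connected components, and call $C_i$ \emph{saturated} if $\lambda_1(C_i) = \lambda$ and \emph{overtop} if $\lambda_1(C_i) > \lambda$. By the definition of $k(\lambda)$, each saturated component has $|C_i| \ge k(\lambda)$, and by Perron--Frobenius contributes exactly $1$ to $\mlt(\lambda, H)$. Letting $s$ be the number of saturated components, $T$ the total size of overtop components, and $\mlt^O$ their total contribution to $\mlt(\lambda, H)$, the spectral reduction yields
\[ s + \mlt^O \ge N - d - O(1) \quad\text{and}\quad s\,k(\lambda) + T \le N. \]

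\emph{Step 3: The key multiplicity bound.} The technical core is to prove
\[ \mlt^O \le \Bigl(1 - \frac{2(k(\lambda)-1)}{p\,k(\lambda)}\Bigr)\,T + O_{\alpha,\beta}(1) \]
when $k(\lambda) < \infty$ (and the analogous $\mlt^O \le (1-2/p)\,T + o(T)$ when $k(\lambda) = \infty$). This fraction is sharp: one can verify, using the construction of \cref{prop:lower-bound} with $\lambda = 1$ and $p = 3$, that a single overtop component attains $\mlt(\lambda, C)/|C| \to 2/3 = 1 - 2(k(\lambda)-1)/(p\,k(\lambda))$. The bound combines: (i) a max-degree bound $\Delta(H) = O_{\alpha,\beta}(d)$, via projecting the $\beta$-neighbors of a vertex onto its orthogonal complement to get a lower-dimensional spherical two-distance code; (ii) the sublinear higher-eigenvalue-multiplicity theorem of \cite{JTYZZ19}, applied to bounded-degree components after an appropriate reduction; and (iii) the $p$-coloring structure of the complete signed graph on the code, which provides a $(2/p)$-fraction of eigendirections transversal to the $\lambda$-eigenspace.

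\emph{Step 4: Conclusion.} Substituting the bound of Step~3 into the two inequalities of Step~2 and simplifying yields $(k(\lambda) - 1)\,N \le q\,k(\lambda)\,d + O(1)$, which is the claimed bound. When $k(\lambda) = \infty$ there are no saturated components ($s = 0$), and the same computation gives $N \le q\,d + o(d)$.

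\textbf{Main obstacle.} The main difficulty is Step~3: for $p \le 2$ the bound reduces to the sublinear second-eigenvalue multiplicity result and follows along the lines of the equiangular case, but for $p \ge 3$ a genuinely \emph{linear} upper bound on the multiplicity in overtop components is required. Moreover, the max degree $\Delta(H) = O_{\alpha,\beta}(d)$ scales with $d$ rather than being an absolute constant, so the sublinear-multiplicity theorem of \cite{JTYZZ19} is not directly applicable and must be combined with a pillar-type extraction that isolates a bounded-degree subgraph. Extracting the precise linear fraction $2/p$ relies on interpreting the $p$-coloring of the $\alpha$-cliques combinatorially and matching this against the spectral bookkeeping.
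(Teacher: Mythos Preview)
Your overall architecture (Gram matrix $\to$ spectral condition $\to$ component decomposition $\to$ multiplicity bound) matches the paper's, but Step~3 is where the real content lies, and as you yourself flag, it is not a proof but a wish list. The paper fills this gap through a route you do not mention: a \emph{structural theorem} (\cref{thm:struct-strong}) showing that after deleting $O_{\alpha,\beta}(1)$ vertices, the $\beta$-graph differs from a complete $t$-partite graph (with $t\le p$) by a graph of \emph{absolutely bounded} maximum degree. This single result simultaneously supplies all three ingredients you are missing: it produces the $p$-coloring (the parts of the $t$-partite graph), it gives the $O_{\alpha,\beta}(1)$ degree bound needed to invoke \cref{thm:kth-ev-mult}, and it converts the problem to one about a signed graph $G^\pm$ with $\chi(G^\pm)\le p$ and $\lambda_{p+1}(G^\pm)\le\lambda$. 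The $2/p$ factor then drops out cleanly: restrict to the two largest color classes (at least $2N/p$ vertices), observe that a signed graph with $\chi\le 2$ is isospectral to its underlying unsigned graph, apply \cref{lem:mult-bound}, and use Cauchy interlacing to pass back to $G^\pm$.

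Your alternative ideas for Step~3 do not work as stated. The degree bound $\Delta(H)=O_{\alpha,\beta}(d)$ is correct but useless for \cref{thm:kth-ev-mult}, whose constant depends on $\Delta$. The vague ``pillar-type extraction'' is in fact the structural theorem, and proving it is the bulk of the work (all of \cref{sec:struct-proof}); it is not a routine cleanup. Finally, your ``$p$-coloring of the $\alpha$-cliques'' has no clear meaning on the unsigned $\beta$-graph $H$ you are working with: the $\alpha$-graph need not be a union of $p$ cliques, and there is no a~priori partition of the vertices into $\le p$ classes with the required spectral transversality. That partition only exists after the structural theorem has been applied, and it is a partition of (almost all of) $V(H)$, not of the $\alpha$-cliques.
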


Our proof of \cref{thm:main} indeed confirms \cref{conj:main} in all the solved cases, namely when $p \le 2$ or $\lambda \in \{1,\sqrt{2},\sqrt{3}\}$.
We employ a number of different methods for bounding eigenvalue multiplicities in signed graphs in the different parts of \cref{thm:main}:

\begin{itemize}
  \item For \ref{thm:main-p} and \ref{thm:main-lambda-1}, we apply the sublinear bound on eigenvalue multiplicity of bounded degree unsigned graphs (\cref{thm:kth-ev-mult} above; see \cref{sec:graph-mult}).
  \item For \ref{thm:main-triangle-counting}, we develop a forbidden induced subgraph framework (see \cref{sec:forbidden}), and we apply a careful third moment and triangle counting argument (see \cref{sec:triangle-counting}).
  \item For \ref{thm:main-lambda-2-3} we apply an algebraic degree argument (see \cref{sec:algebraic}). Additionally, we confirm \cref{conj:main} for all algebraic integers $\lambda$ whose degree equals $k(\lambda)$ (see the end of \cref{sec:algebraic}).
\end{itemize}

\begin{remark}
  After this work is completed, building on our forbidden induced subgraph framework, Jiang and Polyanskii~\cite{JP21} proved \cref{conj:main} for every $\lambda < \lambda^*$, where $\lambda^* = \beta^{1/2} + \beta^{-1/2} \approx 2.01980$ and $\beta$ is the unique real root of $x^3 = x + 1$.
\end{remark}

A major obstacle to completely settling \cref{conj:main} is that bounded degree signed graphs may have linear top eigenvalue multiplicity.

\begin{theorem} \label{thm:large-mult-exmaple}
  For every $n \ge 3$, there is a connected signed graph with $6n$ vertices, maximum degree $5$, and chromatic number $3$, such that its largest eigenvalue appears with multiplicity $n$.
\end{theorem}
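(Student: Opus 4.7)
The plan is to construct $G_n^\pm$ explicitly as a cyclic concatenation of $n$ identical $6$-vertex signed ``building blocks'' $H^\pm$. Choose $H^\pm$ to be a connected signed graph on six vertices with chromatic number $3$ whose top eigenvalue $\lambda > 0$ has multiplicity at least $2$, and such that the $\lambda$-eigenspace contains a nonzero vector vanishing at a designated set of ``interface'' vertices. A natural candidate is the unbalanced signed $6$-cycle (with an odd number of negative edges), for which $\lambda_1 = \sqrt{3}$ has multiplicity $2$, $\chi = 3$, and a $\sqrt{3}$-eigenvector vanishing at any prescribed single vertex exists.

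Take $n$ copies $H_1^\pm, \dots, H_n^\pm$ of $H^\pm$ and, for each $i \in \mathbb{Z}/n$, add a small number of signed link edges between the interface vertices of $H_i^\pm$ and $H_{i+1}^\pm$. This produces a connected signed graph on $6n$ vertices whose maximum degree is at most the degree of $H^\pm$ (which is small, say $\le 3$) plus the number of link edges incident to any interface vertex (which we keep $\le 2$), safely under $5$. Choose the link-edge signs so that (i) a global valid $3$-coloring of $G_n^\pm$ exists, giving $\chi(G_n^\pm) = 3$, and (ii) the block eigenvector $\xi$ of $H_i^\pm$, which vanishes at the interface vertices, remains an eigenvector of the signed adjacency of $G_n^\pm$ when extended by zero outside $H_i^\pm$. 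Property (ii) is automatic from the vanishing condition: every link edge has its ``interface'' endpoint in some $H_j^\pm$ where $\xi$ (restricted to its support) is zero. This yields $n$ linearly independent $\lambda$-eigenvectors of $G_n^\pm$ with pairwise disjoint supports, so $\mlt(\lambda, G_n^\pm) \ge n$.

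The main step, and the key obstacle, is verifying $\lambda_1(G_n^\pm) = \lambda$, from which exact multiplicity $n$ follows. Exploit the $\mathbb{Z}/n$ translation symmetry: decompose the signed adjacency of $G_n^\pm$ into $n$ Bloch fibres $H(k)$ of size $6 \times 6$, one for each character $k \in \mathbb{Z}/n$. Verify that each fibre has $\lambda$ as its top eigenvalue with multiplicity exactly one (the localized $\xi$ lifts to an eigenvector in each fibre by construction). The delicate point is that a naive choice of link-edge signs typically introduces a ``breathing mode'' in $H(k)$ whose eigenvalue exceeds $\lambda$ for some $k$, thereby pushing the global top eigenvalue above $\lambda$.

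This obstacle is resolved by a careful tuning of the link-edge signs (for example pairing a positive link edge and a negative link edge between the same pair of consecutive blocks, or choosing interface vertices with opposite parity under an internal symmetry of $H^\pm$) so that the momentum-dependent perturbation is orthogonal to the offending mode and the top of every fibre remains exactly at $\lambda$. With an explicit block and linking pattern fixed, the remaining verification amounts to a direct $6\times 6$ diagonalization of $H(k)$ as a function of $k$; the degree bound, connectedness, and chromatic number checks are then routine.
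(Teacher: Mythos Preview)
Your outline has the right architecture—assemble $G_n^\pm$ from $n$ identical $6$-vertex signed gadgets, arrange that a top eigenvector of each gadget extends by zero to an eigenvector of the whole graph, then argue that no other eigenvalue exceeds $\lambda$—but the last step is the entire content of the theorem, and you have not carried it out. You take the unbalanced signed $6$-cycle as the gadget ($\lambda_1=\sqrt3$ with multiplicity $2$, $\chi=3$) and assert that the link-edge signs can be ``tuned'' so that every Bloch fibre $H(k)$ has top eigenvalue exactly $\sqrt3$, deferring the check to ``a direct $6\times6$ diagonalization'' that is never performed and for which no explicit linking pattern is exhibited. The obstacle you name is real and does not obviously go away: a single link edge between interface vertices contributes a rank-one perturbation $2\sigma\cos(k)\,e_v e_v^\T$ to $A_H$, and since the $\sqrt3$-eigenspace of the $6$-cycle is not orthogonal to $e_v$, this strictly raises an eigenvalue above $\sqrt3$ for half the values of $k$. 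Your proposed fixes (pairing $\pm$ links, using two interface vertices) force the localized eigenvector to vanish at \emph{two} prescribed vertices, which in a $2$-dimensional eigenspace generically leaves only the zero vector; and you also omit the check that whatever sign pattern is needed for spectral cancellation remains compatible with a global valid $3$-coloring. As written, the proposal is a plan rather than a proof.

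The paper avoids these difficulties by a different choice of gadget with a large spectral gap. Each unit is a signed $K_5$ (three positive edges forming a triangle, the rest negative) attached to one vertex $v_i$ of a positive $n$-cycle, via a \emph{positive} edge to one $K_5$-vertex $u_i^+$ and a \emph{negative} edge to another $u_i^-$. The signed $K_5$ has top eigenvalue $(\sqrt{33}+1)/2\approx 3.37$, simple, and by symmetry its top eigenvector assigns equal values to $u_i^+$ and $u_i^-$; hence the $\pm$ attachments cancel at $v_i$ and the padded eigenvector survives in $G_n^\pm$. For the upper bound one does not use Bloch analysis at all: on the orthogonal complement of the $n$ localized eigenvectors, split $\bm x=\bm y+\bm z$ so that $\bm y$ lives in the $K_5$-copies (where $\bm y^\T A\bm y\le\lambda_2(K_5^\pm)\,\bm y^\T\bm y=\bm y^\T\bm y$) and $\bm z$ lives on the cycle together with the link endpoints (whose underlying $2$-chromatic signed subgraph has spectral radius $3$ by a Perron--Frobenius check). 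Both bounds are well below $(\sqrt{33}+1)/2$, so the top eigenvalue is exactly $(\sqrt{33}+1)/2$ with multiplicity $n$. The point is that choosing a gadget whose $\lambda_1$ is comfortably larger than anything the connector can produce replaces your delicate fibre-by-fibre tuning with a robust two-line estimate.
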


The rest of the paper is organized as follows. In \cref{sec:spectral-graph}, we explain the connection with spherical two-distance sets and the spectral theory of signed graphs, and further proves a lower bound on $N_{\alpha,\beta}(d)$. 
In \cref{sec:struct-proof} we prove the structural result, \cref{thm:structure}.
In \cref{sec:graph-mult} we prove \cref{thm:main}\ref{thm:main-p}, \cref{thm:main}\ref{thm:main-lambda-1}, and \cref{thm:upper-bound} using \cref{thm:kth-ev-mult}. In \cref{sec:forbidden} we develop a forbidden induced subgraph framework to bound $N_{\alpha,\beta}(d)$ from above. In \cref{sec:triangle-counting} we prove \cref{thm:main}\ref{thm:main-triangle-counting} via a third moment argument under the forbidden induced subgraph framework. In \cref{sec:algebraic} we prove \cref{thm:main}\ref{thm:main-lambda-2-3} via an algebraic argument. In \cref{sec:large-mult} we give two constructions related to \cref{thm:large-mult-exmaple}.

\section{Connection to spectral theory of signed graphs} \label{sec:spectral-graph}

The spherical two-distance set problem has the following equivalent spectral graph theoretic formulation. Here $A \succeq 0$ means that $A$ is positive semidefinite.

\begin{lemma} \label{lem:code-graph}
  Let $-1\le\beta<\alpha<1$.
  Set $\lambda = (1-\alpha)/(\alpha-\beta)$ and $\mu = \alpha/(\alpha-\beta)$. There exists a spherical $\set{\alpha, \beta}$-code of size $N$ in $\R^d$ if and only if there exists a graph $G$ on $N$ vertices satisfying
  \[
    \lambda I-A_{G}+\mu J \succeq 0 \qquad\text{and} \qquad \rnk(\lambda I-A_{G}+\mu J) \le d.
  \]
\end{lemma}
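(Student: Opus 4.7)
The plan is to use the standard Gram matrix correspondence between configurations of unit vectors and positive semidefinite matrices with ones on the diagonal.

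For the forward direction, suppose $v_1,\dots,v_N \in \R^d$ are unit vectors with $\langle v_i,v_j\rangle \in \set{\alpha,\beta}$ for $i \ne j$. I would define the graph $G$ on $[N]$ by declaring $ij$ to be an edge precisely when $\langle v_i,v_j\rangle = \beta$, and consider the Gram matrix $M$ with entries $M_{ij} = \langle v_i,v_j\rangle$. Then $M$ decomposes as
\[
M = I + \alpha(J - I - A_G) + \beta A_G = (1-\alpha)I + \alpha J - (\alpha-\beta) A_G.
\]
Using $\lambda = (1-\alpha)/(\alpha-\beta)$ and $\mu = \alpha/(\alpha-\beta)$ (note $\alpha-\beta > 0$), the right-hand side equals $(\alpha-\beta)\bigl(\lambda I - A_G + \mu J\bigr)$. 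Because $M$ is a Gram matrix of vectors in $\R^d$, it is positive semidefinite and has rank at most $d$; dividing by the positive scalar $\alpha-\beta$ preserves both properties, giving the desired conclusion for $G$.

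For the reverse direction, I would run this identity backwards. Given such a graph $G$, set $M := (\alpha-\beta)(\lambda I - A_G + \mu J)$. The hypothesis yields $M \succeq 0$ and $\rnk(M) \le d$. A direct calculation shows $M_{ii} = (\alpha-\beta)(\lambda + \mu) = 1$ and $M_{ij} \in \set{\alpha,\beta}$ for $i \ne j$ (specifically $\alpha$ on non-edges and $\beta$ on edges). Since $M$ is positive semidefinite of rank at most $d$, one can factor $M = V^\T V$ for some $V \in \R^{d \times N}$, whose columns are unit vectors in $\R^d$ realizing $M$ as a Gram matrix, hence forming a spherical $\set{\alpha,\beta}$-code of size $N$.

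There is essentially no obstacle here; the proof is a one-line algebraic identity together with the standard Gram matrix lemma. The only thing to be careful about is the choice of convention (edges corresponding to inner product $\beta$ versus $\alpha$), which must be fixed consistently with the sign in front of $A_G$ in the target expression $\lambda I - A_G + \mu J$.
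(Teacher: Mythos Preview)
Your proof is correct and follows essentially the same approach as the paper: define the graph so that edges record inner product $\beta$, expand the Gram matrix as $(\alpha-\beta)(\lambda I - A_G + \mu J)$, and use that Gram matrices are exactly the positive semidefinite matrices of the appropriate rank. Your converse direction is in fact slightly more explicit than the paper's, since you verify that the diagonal entries of $M$ equal $1$.
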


\begin{proof}
  For a spherical $\set{\alpha, \beta}$-code $\set{v_1, \dots, v_N}$ in $\R^d$, let $G$ be the \emph{associated graph} on vertex set $\set{1, \dots, N}$, where $ij$ is an edge whenever $\langle v_i, v_j\rangle = \beta$. The Gram matrix $M = (\langle v_i, v_j\rangle)_{i,j}$ has 1's on its diagonal and $\alpha, \beta$ everywhere else, so it equals $(1-\alpha)I - (\alpha - \beta)A_G + \alpha J$, where $I$ is the identity matrix, $J$ the all-ones matrix, and $A_G$ the adjacency matrix of $G$. We have $M/(\alpha - \beta) = \lambda I - A_G + \mu J$, where $\lambda = (1-\alpha)/(\alpha - \beta)$ and $\mu = \alpha/(\alpha - \beta)$. Since the Gram matrix $M$ is positive semidefinite and has rank at most $d$, the same holds for $\lambda I - A_G + \mu J$.

  Conversely, for every $G$, $\lambda$ and $\mu$ for which $\lambda I-A_{G}+\mu J$ is positive semidefinite and has rank $d$, there exists a corresponding configuration of $N$ unit vectors in $\R^d$, with pairwise inner products in $\set{\alpha,\beta}$.
\end{proof}

We are now ready to establish a lower bound on $N_{\alpha,\beta}(d)$ using \cref{lem:code-graph}.

\begin{proposition} \label{prop:lower-bound}
  Fix $-1\le\beta<0\le\alpha<1$. Then $N_{\alpha,\beta}(d) \ge d$ for every positive integer $d$. Moreover if $k_p(\lambda) < \infty$, where $\lambda = (1-\alpha)/(\alpha-\beta)$ and $p = \floor{-\alpha / \beta} + 1$, then
  \[
    N_{\alpha,\beta}(d) \ge \begin{cases}
      \displaystyle \frac{k_p(\lambda) d}{k_p(\lambda)-1} - O_{\alpha,\beta}(1) \qquad & \text{if }k_p(\lambda)\text{ is achievable}, \\
      \displaystyle \frac{k_p(\lambda) d}{k_p(\lambda)-1} - o(d) & \text{otherwise}.
    \end{cases}
  \]
\end{proposition}

\begin{proof}
  Let  $\mu = \alpha/(\alpha-\beta)$.
  Take $G$ to be $d$-vertex graph with no edges, so that $A_G = 0$ and $\lambda I - A_G + \mu J$ is positive semidefinite and has rank at most $d$. So $N_{\alpha,\beta}(d) \ge d$ by \cref{lem:code-graph}. In fact, the spherical two-distance set constructed here forms a regular $(d-1)$-simplex.

  Hereafter assume that $k_p(\lambda) < \infty$. We first construct, for every signed graph $G^\pm$ with $\chi(G^\pm) \le p$ and $\lambda_1(G^\pm) = \lambda$, a spherical $\set{\alpha, \beta}$-code of size $\abs{G^\pm}$ in dimension $\abs{G^\pm} - \mult(\lambda, G^\pm) + p$. Let $V_1, \dots, V_p$ be the color classes of a valid $p$-coloring.
  Consider the unsigned graph $G$ obtained from taking the symmetric difference between the underlying graph of $G^\pm$ and the complete $p$-partite graph with parts $V_1, \dots, V_p$. The adjacency matrix of $G$ is related to the signed adjacency matrix of $G^\pm$ by
  \[
    A_{G} = A_{G^\pm} + A_K,
  \]
  where $K$ is the complete $p$-partite graph with parts $V_1, \dots, V_p$. Therefore,
  \[
    \lambda I - A_{G} + \mu J = (\lambda I - A_{G^\pm}) + (\mu J - A_K).
  \]
  We have $\lambda I - A_{G^\pm} \succeq 0$ since $\lambda_1(G^\pm) = \lambda$. We now note that $\mu J - A_K$ is positive semidefinite. Indeed, for every $\bm{x}\in \R^{V(G^\pm)}$, we set $s_i = \sum_{v\in V_i} \bm{x}_v$ for each $i \in \set{1,\dots, p}$, and we see
  \[
    \bm{x}^\T(\mu J - A_K)\bm{x} = \mu \left(\sum_i s_i\right)^2 - \sum_{i\neq j}s_is_j = \mu\sum_i s_i^2 - (1 - \mu)\sum_{i\neq j}s_is_j.
  \]
  Because $\sum_{i\neq j}s_is_j \leq (p - 1)\sum_i s_i^2$ and $p \le {1}/({1 - \mu})$, we conclude that
  \[
    \bm{x}^\T(\mu J - A_K)\bm{x}\geq \left(\mu - (1-\mu)\left(\frac{1}{1-\mu}-1\right)\right)\sum_i s_i^2 = 0.
  \]
  Therefore $\mu J - A_K \succeq 0$, and so $\lambda I - A_{G} + \mu J \succeq 0$. We conclude by \cref{lem:code-graph} that there exists a spherical $\set{\alpha, \beta}$-code of size $\abs{G^\pm}$ in $\mathbb{R}^{d}$, where
  \[
    d = \rnk(\lambda I - A_{G} + \mu J) \leq \rnk{ (\lambda I - A_{G^\pm})} + \rnk(\mu J - A_K) \leq \abs{G^\pm} - \mult(\lambda, G^\pm) + p.
  \]

  Now fix an arbitrary $\eps > 0$. Take a signed graph $G^\pm_\eps$ such that ${\abs{G^\pm_\eps}}/{\mult(\lambda, G^\pm_\eps)} \le k_p(\lambda) + \eps$, $\chi(G^\pm_\eps) \le p$, and $\lambda_1(G^\pm_\eps) = \lambda$. For each positive integer $\ell$, denote by $\ell G^\pm_\eps$ the disjoint union of $\ell $ copies of $G^\pm_\eps$. We have $\abs{\ell G^\pm_\eps} = \ell\abs{G^\pm_\eps}$, $\mult(\lambda, \ell G^\pm) = \ell \mult(\lambda, G^\pm)$, $\chi(\ell G^\pm_\eps) = \chi(G^\pm_\eps) \le p$ and $\lambda_1(\ell G^\pm_\eps) = \lambda_1(G^\pm_\eps) = \lambda$. Thus we can apply the above construction to $G^\pm = \ell G^\pm_\eps$ to obtain a spherical $\set{\alpha, \beta}$-code of size $\ell \abs{G^\pm_\eps}$ in dimension $\ell (\abs{G^\pm_\eps} - \mult(\lambda, G^\pm_\eps)) + p$. We conclude that
  \begin{multline*}
    N_{\alpha,\beta}(d) \geq \abs{G^\pm_\eps}\floor*{\frac{d - p}{\abs{G^\pm_\eps} - \mult(\lambda, G^\pm_\eps)}} \ge \frac{d}{1 - \mult(\lambda, G^\pm_\eps) / \abs{G^\pm_\eps}} - O_{\alpha,\beta,\eps}(1) \\
    \ge \frac{d}{1-1/(k_p(\lambda)+\eps)} - O_{\alpha,\beta,\eps}(1) = \frac{(k_p(\lambda) + \eps)d}{k_p(\lambda)-1 + \eps} - O_{\alpha,\beta,\eps}(1).
  \end{multline*}
  
  Finally notice that when $k_p(\lambda)$ is achievable, we can take $\eps = 0$ in the above argument.
\end{proof}

\section{Structure of the associated graph} \label{sec:struct-proof}

In this section we prove \cref{thm:structure}, which gives a structure characterization of graphs that can arise from a spherical two-distance set. To that end, we introduce the following notation.

\begin{definition}
  Given a graph $G$, for sets $Y \subseteq X \subseteq V(G)$, define $C_X(Y)$ to be the set of vertices in $V(G) \setminus X$ that are adjacent to all vertices in $Y$ and not adjacent to any vertices in $X\setminus Y$, and for a set $X \subseteq V(G)$ and $\Delta \in \N$, define
  \[
    C_{X,\Delta} = \bigcup_{Y\subseteq X\colon \abs{Y} \le \Delta} C_{X}(Y)
    \qquad \text{and} \qquad
    C_{X,-\Delta} = \bigcup_{Y\subseteq X\colon \abs{X \setminus Y} \le \Delta} C_{X}(Y).
  \]
\end{definition}

We now present a series of structural lemmas leading to the proof of \cref{thm:structure}.

\begin{lemma} \label{lem:struct}
  For every $\lambda>0$ and $\mu\in(0,1)$, there exist $\Delta \in \N$ and $L_0 \in \N$ such that for every graph $G$ that satisfies $\lambda I - A_G + \mu J \succeq 0$ the following holds.
  \begin{enumerate}[label=(\alph*)]
    \item Neither of the following is an induced subgraph of $G$:
    \begin{enumerate}[label=(a\arabic*)]
      \item the complete graph $K_\Delta$; \label{lem:struct-a1}
      \item the complete $(p+1)$-partite graph $K_{\Delta, \dots, \Delta}$, where $p=\floor{1/(1-\mu)}$. \label{lem:struct-a2}
    \end{enumerate}
    \item For every independent set $X$ of size $L$ in $G$, if $L \ge L_0$, then
    \begin{enumerate}[label=(b\arabic*)]
      \item the maximum degree of $G[C_{X,\Delta}]$ is less than $\Delta$, and \label{lem:struct-b1}
      \item the number of vertices not in $C_{X,\Delta}\cup C_{X,-\Delta}$ is at most $L2^L$. \label{lem:struct-b2}
    \end{enumerate}
    \item For every pair of disjoint vertex subsets $X_1$ and $X_2$, each of size $L$, in $G$, if $L \ge L_0$ and $G[X_1 \cup X_2]$ is the complete bipartite graph with parts $X_1$ and $X_2$, then
    \begin{enumerate}[label=(c\arabic*)]
      \item every vertex in $C_{X_1, \Delta} \cap C_{X_2, -\Delta}$ is adjacent to all but at most $\Delta$ vertices in $C_{X_1, -\Delta} \cap C_{X_2, \Delta}$, and \label{lem:struct-c1}
      \item the number of vertices in $C_{X_1,\Delta}\cap C_{X_2,\Delta}$ is less than $\Delta$. \label{lem:struct-c2}
    \end{enumerate}
  \end{enumerate}
\end{lemma}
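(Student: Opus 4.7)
The plan is to first prove parts (a1) and (a2) directly from the PSD hypothesis, and then to use these as forbidden induced subgraphs in Ramsey-type arguments that establish the remaining parts.

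For part (a), I would use that the PSD condition passes to principal submatrices: if $H$ is an induced subgraph of $G$ on $|S|$ vertices, then $\lambda I - A_H + \mu J \succeq 0$ as $|S| \times |S|$ matrices. For $H = K_\Delta$, decomposing $\lambda I + I - (1-\mu)J$ via the all-ones direction versus its orthogonal complement gives smallest eigenvalue $(\lambda+1) - (1-\mu)\Delta$, which is negative once $\Delta > (\lambda+1)/(1-\mu)$. For $H = K_{\Delta,\ldots,\Delta}$ with $p+1$ parts, writing $A_H = J - P$ with $P$ the indicator of being in the same part, and decomposing into the all-ones vector, constant-per-part zero-sum vectors, and zero-sum-per-part vectors, the critical eigenvalue is $\lambda + \Delta - (1-\mu)\Delta(p+1)$. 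Since $p = \floor{1/(1-\mu)}$ gives $(1-\mu)(p+1) > 1$, this becomes negative for $\Delta$ sufficiently large. Fix $\Delta$ to exceed both of these bounds and also to exceed the Ramsey numbers used below.

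For (b1), suppose for contradiction that $v \in C_{X,\Delta}$ has $\ge \Delta$ neighbors $u_1,\ldots,u_\Delta$ in $C_{X,\Delta}$. Each of these $\Delta+1$ vertices has at most $\Delta$ neighbors in $X$, so their $X$-neighborhoods cover at most $\Delta(\Delta+1)$ vertices and leave many common non-neighbors in $X$, provided $L$ is large. Applying Ramsey to $\set{u_1,\ldots,u_\Delta}$ produces either a clique, contradicting (a1), or a large independent subset $I$; in the latter case, $\set{v}\cup I$ together with successive subsets of the common $X$-non-neighbors (each independent, since $X$ is) can be organized into a complete $(p+1)$-partite induced subgraph, contradicting (a2). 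For (b2), apply pigeonhole to $Y(v) := N(v) \cap X$ for vertices outside $C_{X,\Delta} \cup C_{X,-\Delta}$, which is constrained to lie in $\binom{X}{[\Delta+1,L-\Delta-1]}$, a set of size at most $2^L$. Thus more than $L 2^L$ such vertices would force more than $L$ to share a common $Y$, and a Ramsey step on these again yields either a forbidden clique or an independent set that, combined with $Y$ and $X \setminus Y$, produces a forbidden $(p+1)$-partite graph.

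Parts (c1) and (c2) follow by analogous arguments. For (c2), if $\ge \Delta$ vertices belong to $C_{X_1,\Delta} \cap C_{X_2,\Delta}$, each has at most $\Delta$ neighbors in each of $X_1, X_2$; Ramsey on these vertices combined with sub-sampled parts of $X_1, X_2$ (whose mutual $K_{L,L}$ persists) builds a forbidden $K_{\Delta,\ldots,\Delta}$ via (a2). For (c1), a vertex $v \in V_1$ violating the $(2\Delta)$-modification bound either has $\ge \Delta$ non-neighbors in $V_2$ or $\ge \Delta$ neighbors in $V_1$; in each case a Ramsey argument on the witnesses, combined with sub-samples of $X_1 \cup X_2$ on which $K_{L,L}$ persists, builds a forbidden dense multipartite subgraph.

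The main obstacle is extracting a complete $(p+1)$-partite substructure from a Ramsey step that only distinguishes clique versus independent set: to build the remaining $p-1$ parts, one must iterate pigeonhole on subsets of $X$ (or $X_1 \cup X_2$) via the $C_X(Y)$ partition, taking $\Delta$ to be an iterated Ramsey number and $L_0$ to absorb the resulting slack (of order $\Delta(\Delta+1)$ and $L 2^L$). When $p = 1$ everything reduces to a single bipartite step, but for general $p$ this multipartite extraction is the core technical content of the lemma.
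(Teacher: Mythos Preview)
Your argument for parts (a1) and (a2) is correct and essentially matches the paper. The gap is in parts (b) and (c): once you have fixed $\Delta$, you propose to derive (b1), (b2), (c1), (c2) purely combinatorially, using only (a1) and (a2) as forbidden induced subgraphs. This does not work. Take (b1): a vertex $v\in C_{X,\Delta}$ with neighbours $u_1,\dots,u_\Delta\in C_{X,\Delta}$ yields, after your Ramsey step, an independent set $I\subseteq\{u_i\}$ and a large set $X'\subseteq X$ of common non-neighbours. The induced subgraph on $\{v\}\cup I\cup X'$ is a star $v\to I$ together with the isolated set $X'$; there is no way to extract a complete $(p+1)$-partite $K_{\Delta,\dots,\Delta}$ from it, because $I$ and $X'$ have no edges between them (nor within). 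Similarly in (b2), after pigeonhole and Ramsey you obtain $I'$, $Y$, $X\setminus Y$ with $I'$ complete to $Y$ and no other edges; this gives $K_{|I'|,|Y|}$ plus isolated vertices, which for $p\ge 2$ (i.e.\ $\mu\ge 1/2$) is \emph{not} forbidden by (a2), since (a2) requires $p+1\ge 3$ parts. Indeed, arbitrarily large $K_{m,m}$ is compatible with $\lambda I - A_G + \mu J\succeq 0$ once $\mu\ge 1/2$, so no purely Ramsey argument based on (a1)--(a2) can rule these configurations out. The analogous issue arises in (c1) and (c2): the sets you assemble are pairwise complete in at most two directions, never $p+1$.

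The paper does not use (a) as input to (b) and (c) at all. Instead, each of (b1), (b2), (c1), (c2) is proved by a separate, carefully chosen test vector plugged directly into $\bm v^\T(\lambda I - A_G + \mu J)\bm v\ge 0$. For instance, for (b1) one assigns weight $L$ to $v$, weight $\lambda L/\Delta$ to each $u_i$, and weight $-(\lambda+1)$ to each vertex of $X$; the resulting quadratic form is $-\lambda(1-\lambda^2/\Delta)L^2 + O(\Delta L)$, negative for large $L$. For (b2) one optimises a three-parameter test vector over $A$, $X\setminus A$, $C_X(A)$ and reads off a discriminant inequality that forces $|C_X(A)|<L$. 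The quantitative PSD condition is used in full strength throughout, not just to forbid finitely many subgraphs.
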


\begin{proof}[Proof of \ref{lem:struct-a1}]
  Suppose on the contrary that $G$ contains $K_\Delta$ as a subgraph. Let $\bm v\in \R^{V(G)}$ be the vector that assigns $1$ to vertices in $K_\Delta$ and $0$ otherwise. Then $\bm{v}^\T(\lambda I - A_G + \mu J) \bm{v}$ becomes
  \[
    \lambda \Delta - \Delta(\Delta - 1) + \mu \Delta^2,
  \]
  which would be negative if we had chosen $\Delta > (1+\lambda)/(1-\mu)$.
\end{proof}
\begin{proof}[Proof of \ref{lem:struct-a2}]
  Suppose on the contrary that $G$ contains the complete $(p+1)$-partite graph $K_{\Delta, \dots, \Delta}$ as an induced subgraph. Again let $\bm v\in \R^{V(G)}$ be the vector that assigns $1$ to the vertices in $K_{\Delta, \dots, \Delta}$ and $0$ otherwise. Then $\bm{v}^\T(\lambda I - A_G + \mu J) \bm{v}$ becomes
  \[
    \lambda (p+1)\Delta - p(p+1)\Delta^2 + \mu((p+1)\Delta)^2 = (p+1)\Delta\left(\lambda - (p - \mu(p+1))\Delta\right).
  \]
  Because $p > 1/(1-\mu) - 1 = \mu/(1-\mu)$ or equivalently $p > \mu(p+1)$, the last factor above would be negative if we had chosen $\Delta > \lambda/(p-\mu(p+1))$.
\end{proof}
\begin{proof}[Proof of \ref{lem:struct-b1}]
  Suppose on the contrary that a vertex $u \in C_{X, \Delta}$ has $\Delta$ neighbors $v_1, \dots, v_\Delta \in C_{X, \Delta}$. Let $\bm{v}\in \R^{V(G)}$ be the vector that assigns $L$ to $u$, $\lambda L/\Delta$ to $v_1, \dots, v_\Delta$, $-(\lambda+1)$ to the vertices in $X$, and $0$ otherwise. Because $u, v_1, \dots, v_\Delta \in C_{X, \Delta}$, we have
  \[
    \tfrac{1}{2}\bm{v}^\T A_G\bm{v} \ge \lambda L^2 - (\lambda+1)\Delta L - \lambda(\lambda+1)\Delta L = \lambda L^2 - (\lambda+1)^2\Delta L.
  \]
  Using this bound and the fact that $\bm{v}^\T \bm 1 = 0$, we obtain that $\bm{v}^\T(\lambda I - A_G + \mu J) \bm{v}$ is at most
  \[
    \lambda (L^2 + \lambda^2L^2/\Delta + (\lambda+1)^2L) - 2(\lambda L^2 - (\lambda+1)^2\Delta L)= -\lambda(1-\lambda^2/\Delta) L^2 + O_{\lambda, \Delta}(L),
  \]
  which would be negative for sufficiently large $L$ if we had chosen $\Delta > \lambda^2$.
\end{proof}
\begin{proof}[Proof of \ref{lem:struct-b2}]
  To show that $\abs{V(G)\setminus (C_{X,\Delta}\cup C_{X,-\Delta})} \le L2^L$, it suffices to prove $\abs{C_X(A)} < L$ for every subset $A$ of the independent set $X$ such that $\abs{A} > \Delta$ and $\abs{X \setminus A} > \Delta$.

  Write $a = \abs{A}$, $b = \abs{X \setminus A}$, and $c = \abs{C_X(A)}$. For any $\alpha, \beta, \gamma \in \R$, we consider the vector $\bm v\in \R^{V(G)}$ that assigns $\alpha$ to the vertices in $A$, $\beta$ to the vertices in $X\setminus A$,  $\gamma$ to the vertices in $C_X(A)$, and $0$ otherwise, and we have
  \[
    0 \le \bm v^\T(\lambda I - A_G + \mu J)\bm v \le \lambda(a\alpha^2 +b\beta^2 + c\gamma^2) - 2ac\alpha\gamma + \mu(a\alpha + b\beta + c\gamma)^2.
  \]
  In particular, taking $\beta = -(a\alpha + c\gamma)/(b + \lambda/\mu)$, we obtain that for all $\alpha, \gamma \in \R$,
  \[
  0 \le \lambda(a\alpha^2  + c\gamma^2) - 2ac\alpha\gamma + \frac{\mu\lambda}{\mu b + \lambda}(a\alpha + c\gamma)^2.
  \]
  For this quadratic form in $\alpha$ and $\gamma$ to be positive semidefinite, its discriminant must be nonpositive:
  \[
    \frac{( \mu b+\lambda(1 - \mu))^2}{(\mu b+\lambda)^2}a^2c^2 - \left(\lambda a + \frac{\mu\lambda a^2}{\mu b+\lambda}\right)\left(\lambda c + \frac{\mu\lambda c^2}{\mu b+\lambda}\right)\leq 0,
  \]
  which simplifies to
  \begin{equation} \label{eqn:abc}
    (\mu b+\lambda(1 - \mu))^2 ac \leq \lambda^2(\mu a + \mu b + \lambda)(\mu b + \mu c + \lambda).
  \end{equation}
  By the assumption that $a, b > \Delta$, if we had taken $\Delta \geq \max\set{\lambda / \mu, 4\lambda^2, 2}$, then $\lambda < \mu b$ and $\lambda^2 < b/4$, hence \cref{eqn:abc} would imply the following series of inequalities:
  \begin{equation*}
    \mu^2 ab^2c < (b/4)(\mu a + 2\mu b)(2\mu b + \mu c) \implies abc < (a+b)(b+c) \implies c < \frac{(a + b)b}{ab - a - b} \leq a + b = L.\qedhere
  \end{equation*}
\end{proof}
\begin{proof}[Proof of \ref{lem:struct-c1}]
  Suppose on the contrary that a vertex $u \in C_{X_1, \Delta} \cap C_{X_2, -\Delta}$ is not adjacent to $v_1, \dots, v_\Delta \in C_{X_1, -\Delta} \cap C_{X_2, \Delta}$. Let $\bm{v}\in \R^{V(G)}$ be the vector that assigns $L$ to $v$, $-\lambda L/\Delta$ to $v_1,\ldots,v_\Delta$, $-1$ to the vertices in $X_1$, $\lambda$ to the vertices in $X_2$, and $0$ otherwise. Because $u \in C_{X_1, \Delta} \cap C_{X_2, -\Delta}$ and $v_1, \dots, v_\Delta \in C_{X_1, -\Delta} \cap C_{X_2, \Delta}$, we have
  \[
    \tfrac{1}{2}\bm v^\T A_G \bm v \ge -\Delta L + \lambda(L-\Delta)L + (\lambda L / \Delta)\Delta(L-\Delta) - \lambda L^2 = \lambda L^2 - (2\lambda + 1)\Delta L.
  \]
  Using this bound and the fact that $\bm v^\T \bm 1 = 0$, we obtain that $\bm v^\T (\lambda I - A_G + \mu J) \bm v$ is at most
  \[
    \lambda(L^2 + \lambda^2L^2/\Delta + L + \lambda^2L) - 2(\lambda L^2 - (2\lambda + 1)\Delta L) = -\lambda(1-\lambda^2/\Delta)L^2 + O_{\lambda,\Delta}(L),
  \]
  which would be negative for sufficiently large $L$ if we had chosen $\Delta > \lambda^2$.
\end{proof}
\begin{proof}[Proof of \ref{lem:struct-c2}]
  Suppose on the contrary that $C_{X_1, \Delta} \cap C_{X_2, \Delta}$ contains $v_1, \dots, v_\Delta$.
  Let $\bm v\in \R^{V(G)}$ be the vector that assigns $2L/\Delta$ to $v_1, \dots, v_\Delta$, $-1$ to the vertices in $X_1 \cup X_2$, and $0$ otherwise. Because $v_1, \dots, v_\Delta \in C_{X_1, \Delta} \cap C_{X_2, \Delta}$, we have
  \[
    \tfrac{1}{2}\bm v^\T A_G v \ge -(2L/\Delta)2\Delta^2 + L^2 = -4\Delta L+L^2.
  \]
  Using this bound and the fact that $\bm v^\T \bm 1 = 0$, we obtain that $\bm v^\T (\lambda I - A_G + \mu J) \bm v$ is at most
  \[
    \lambda(4L^2 / \Delta + 2L) - 2(-4\Delta L + L^2) = -2(1-2\lambda / \Delta)L^2 + O_{\lambda,\Delta}(L),
  \]
  which would be negative for sufficiently large $L$ if we had chosen $\Delta > 2\lambda$.
\end{proof}

\begin{proof}[Proof of \cref{thm:structure}]
  Let $\lambda = (1-\alpha)/(\alpha-\beta)$ and $\mu = \alpha/(\alpha-\beta)$ (and so $p = \floor{-\alpha/\beta+1} = \floor{1/(1-\mu)}$).
  As in \cref{lem:code-graph}, 
  the associate graph $G$ of the spherical $\set{\alpha,\beta}$-set satisfies $\lambda I-A_{G}+\mu J \succeq 0$.

  Choose $\Delta$ and $L_0$ as in \cref{lem:struct}. We shall prove that $G$, after removing at most $pL2^L +\binom{p}{2}\Delta + R(\Delta, L2^{pL})$ vertices, is a $p\Delta$-modification of a complete $p$-partite graph, where $L = L_0 + (p + 2)\Delta$ and $R(\cdot, \cdot)$ is the Ramsey number.
  
  We may assume that $\abs{G} \ge R(\Delta, L)$ because otherwise $G$ is vacuously a $p\Delta$-modification of a complete $p$-partite graph after removing all its vertices. By \cref{lem:struct}\ref{lem:struct-a1} and Ramsey's theorem, there exists an independent set of size $L$ in $G$.
  Choose the maximum $t \le p$ such that the complete $t$-partite graph $K_{L -t\Delta, \dots, L -t\Delta}$ is an induced subgraph of $G$ 
  (note that $t\ge 1$ since there is an independent set of size $L$).
  Let $X_1, \dots, X_t \subset V(G)$ be the parts of this $t$-partite graph.
  
  Define for every $i \in \set{1, \dots, t}$ the vertex subset
  \[
    V_i = C_{X_i,\Delta}\cap\bigcap_{j\neq i}C_{X_j,-\Delta}.
  \]
  By \ref{lem:struct-b1} and \ref{lem:struct-c1} in \cref{lem:struct}, we see that the $G[V_1 \cup \dots \cup V_t]$ is a $t\Delta$-modification of the complete $t$-partite graph with parts $V_1, \dots, V_t$.

  We bound $U := V(G) \setminus (V_1 \cup \dots \cup V_t)$ as follows. Set
  \[
    U_i = V(G) \setminus (C_{X_i,\Delta}\cup C_{X_i,-\Delta}), \quad U_{ij}^- = C_{X_i, \Delta} \cap C_{X_j, \Delta}, \quad U^+ = \bigcap_{i} C_{X_i,-\Delta}.
  \]
  Note that $U = (\bigcup_i U_i) \cup (\bigcup_{i < j} U_{ij}^-) \cup U^+$. It is enough to bound the cardinalities of $U_i, U_{ij}, U^+$.
  \cref{lem:struct}\ref{lem:struct-b2} says that $\abs{U_i} \le L2^L$ for each $i$. \cref{lem:struct}\ref{lem:struct-c2} says that $\abs{U_{ij}^-} \le \Delta$ for $i < j$. 
  
  Finally, we claim that $U^+$ does not contain a subset of size $L 2^{tL}$ that is independent in $G$. Indeed, suppose on the contrary that $U^+$ contains an independent set of size $L 2^{tL}$. Since every vertex in $U^+$ has at least $L - (t+1)\Delta$ neighbors in $X_i$ for each $i$, by the pigeonhole principle, there exist $X_1' \subseteq X_1, \dots, X_t' \subseteq X_t$ and $U' \subseteq U^+$, each of size $L - (t + 1)\Delta$, such that $G[X_1' \cup \dots \cup X_t' \cup U']$ is a complete $(t+1)$-partite graph with parts $X_1', \dots, X_t'$ and $U'$, which contradicts our choice of $t$ or \cref{lem:struct}\ref{lem:struct-a2} in case $t = p$. This finishes the proof of the claim. In view of \cref{lem:struct}\ref{lem:struct-a1} and Ramsey's theorem, we obtain $\abs{U^+} < R(\Delta, L2^{tL})$. In total, $\abs{U} \le tL2^L + \binom{t}{2}\Delta + R(\Delta, L2^{tL})$.
\end{proof}

\section{Graph eigenvalue multiplicity argument} \label{sec:graph-mult}

We estimate the eigenvalue multiplicity of a signed graph with bounded maximum degree by that of a (not necessarily connected) graph. Recall \cref{def:k(lambda)} of the spectral radius order $k(\lambda)$.

\begin{lemma} \label{lem:mult-bound}
  For every $\lambda > 0$, $\Delta \in \N$, and $j \in \N$, if $G$ is an $n$-vertex graph with  maximum degree at most $\Delta$ and $\lambda_j(G) \le \lambda$, then
  \[
  \mult(\lambda, G) \le \begin{cases}
    n / k(\lambda) + O_{\Delta,j,\lambda}(1) \qquad & \text{if }k(\lambda) < \infty,\\
    O_{\Delta,j}(n / \log\log n) & \text{otherwise}.
  \end{cases}
  \]
\end{lemma}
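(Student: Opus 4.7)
The plan is to decompose $G$ into its connected components $G_1,\dots,G_m$ and handle each according to its spectral radius. Since the adjacency matrix of $G$ is block diagonal, $\mlt(\lambda,G) = \sum_i \mlt(\lambda,G_i)$. I classify each component as \emph{small} ($\lambda_1(G_i)<\lambda$, contributing $0$), \emph{exact} ($\lambda_1(G_i)=\lambda$, contributing exactly $1$ by Perron--Frobenius and having at least $k(\lambda)$ vertices when $k(\lambda)<\infty$), or \emph{bad} ($\lambda_1(G_i)>\lambda$). Since $\lambda_j(G)\le \lambda$, at most $j-1$ eigenvalues of $G$ exceed $\lambda$, and because each bad component contributes its top eigenvalue to that count, there are at most $j-1$ bad components. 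Moreover, if $\lambda$ appears in a bad $G_i$ at position $k_i$, then $\sum_i (k_i-1)\le j-1$, so $k_i\le j$. Applying \cref{thm:kth-ev-mult} to each bad $G_i$ yields $\mlt(\lambda,G_i)\le C(\Delta,j)\,n_i/\log\log n_i$ (with the trivial bound $\mlt(\lambda,G_i)\le n_i$ handling tiny cases).

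For the case $k(\lambda)<\infty$, I would prove the per-component claim that each bad $G_i$ satisfies $\mlt(\lambda,G_i)\le n_i/k(\lambda) + O_{\Delta,j,\lambda}(1)$, via the dichotomy $n_i\le N_0$ versus $n_i>N_0$, where $N_0:=\exp\exp(C(\Delta,j)\,k(\lambda))$. If $n_i\le N_0$, the trivial bound gives $\mlt(\lambda,G_i)\le n_i\le N_0 = O(1)$. If $n_i>N_0$, then $\log\log n_i \ge C(\Delta,j)\,k(\lambda)$, so $C(\Delta,j)/\log\log n_i\le 1/k(\lambda)$ and \cref{thm:kth-ev-mult} gives $\mlt(\lambda,G_i)\le n_i/k(\lambda)$. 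Summing over all components, the exact components contribute at most $n_g/k(\lambda)$ (since each has $\ge k(\lambda)$ vertices), while the at most $j-1$ bad components contribute $n_b/k(\lambda) + O_{\Delta,j,\lambda}(1)$. Adding these gives $\mlt(\lambda,G)\le n/k(\lambda)+O_{\Delta,j}(1)$.

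For the case $k(\lambda)=\infty$, there are no exact components, so only bad components contribute. I would split them by $n_i\le\sqrt{n}$ versus $n_i>\sqrt{n}$: the small bad components together contribute at most $(j-1)\sqrt{n}=o(n/\log\log n)$ via the trivial bound; the large ones satisfy $\log\log n_i\ge \log\log n - \log 2$, so their total contribution is bounded by $C(\Delta,j)\,n/(\log\log n - \log 2) = O_{\Delta,j}(n/\log\log n)$.

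The main ingredient is \cref{thm:kth-ev-mult}. The non-routine aspect is the book-keeping observation that the position $k_i$ of $\lambda$ in each bad component stays uniformly bounded by $j$, so a single constant $C(\Delta,j)$ suffices across all components; everything else is elementary casework on component sizes.
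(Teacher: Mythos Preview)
Your proposal is correct and follows essentially the same approach as the paper: decompose into connected components, observe that at most $j-1$ components have spectral radius exceeding $\lambda$, apply \cref{thm:kth-ev-mult} to those ``bad'' components (using that $\lambda$ sits among their top $j$ eigenvalues), and use Perron--Frobenius plus the definition of $k(\lambda)$ for the ``exact'' components. You spell out the size dichotomies (the $N_0$ threshold in the finite case and the $\sqrt{n}$ split in the infinite case) more explicitly than the paper, which simply asserts the relaxed bounds; this is harmless extra detail and not a genuine methodological difference.
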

\begin{proof}
  Let $G_1, \dots, G_t$ be the connected components of $G$ numbered such that $\lambda_1(G_1), \dots, \lambda_1(G_s) > \lambda$ and  $\lambda_1(G_{s+1}), \dots, \lambda_1(G_t) \le \lambda$. Because $\lambda_j(G) \le \lambda$, we know that $s < j$. Set $n_i = \abs{G_i}$ and $n = \sum n_i = \abs{G}$.

  For each $i \le s$, since $G_i$ is a connected graph with maximum degree at most $\Delta$ and $\lambda_j(G_i) \le \lambda$, \cref{thm:kth-ev-mult} gives a constant $C = C(\Delta,j)$ such that
  \begin{equation} \label{eqn:mult-1}
    \mult(\lambda, G_i) \le \frac{Cn_i}{\log\log n_i}.
  \end{equation}
  
  We break the rest of the proof into two cases.

  \medskip
  \noindent\textbf{Case $k(\lambda) < \infty$.} Set $N_0 = \exp(\exp (Ck(\lambda)))$. For $i \le s$, when $n_i \ge N_0$, we can relax \cref{eqn:mult-1} to $\mult(\lambda,G_i) \le n_i / k(\lambda)$; when $n_i < N_0$, clearly $\mult(\lambda,G_i) \le n_i < N_0$. To sum up, for $i \le s$, we always have
  \begin{equation} \label{eqn:mult-2}
    \mult(\lambda, G_i) \le \frac{n_i}{k(\lambda)} + N_0.
  \end{equation}

  For each $i > s$, when $\lambda_1(G_i) = \lambda$, because $G_i$ is connected, we know that $n_i \ge k(\lambda)$, and so by the Perron--Frobenius theorem, we obtain
  \begin{equation} \label{eqn:mult-3}
    \mult(\lambda, G_i) \le 1 \le \frac{n_i}{k(\lambda)};
  \end{equation}
  when $\lambda_1(G_i) < \lambda$, clearly \cref{eqn:mult-3} holds trivially.
  We combine \cref{eqn:mult-2} and \cref{eqn:mult-3} to obtain
  \[
    \mult(\lambda,G) = \sum_{i=1}^t\mult(\lambda, G_i) \le \sum_{i=1}^t \frac{n_i}{k(\lambda)} + s N_0 \le \frac{n}{k(\lambda)} + O_{\Delta,j,\lambda}(1).
  \]

  \medskip
  \noindent\textbf{Case $k(\lambda) = \infty$.} For $i > s$, because $\lambda_1(G_i) \le \lambda$ and $k(\lambda) = \infty$, it must be the case that $\lambda_1(G_i) < \lambda$, and so $\mult(\lambda,G_i) = 0$. Therefore \cref{eqn:mult-1} gives
  \begin{equation*}
    \mult(\lambda,G) = \sum_{i=1}^s\mult(\lambda,G_i) \le j \cdot \max_{1 \le i \le j} \frac{Cn_i}{\log\log n_i} = O_{\Delta,j} \left(\frac{n}{\log\log n}\right). \qedhere
  \end{equation*}
\end{proof}

Next we prove \cref{thm:upper-bound}, which states that
\[
  N_{\alpha,\beta}(d) \le \begin{cases}
    \displaystyle \frac{q k(\lambda) d}{k(\lambda) - 1} + O_{\alpha,\beta}(1) \qquad & \text{if }k(\lambda) < \infty, \\
    q d + o(d) & \text{otherwise},
  \end{cases}
\]
where
$\lambda = (1-\alpha)/(\alpha-\beta)$ and $p = \floor{- \alpha / \beta} + 1$ and $q = \max\set{1, p/2}$.

\begin{proof}[Proof of \cref{thm:upper-bound}]
  In view of \cref{lem:code-graph}, consider a graph $\wt{G}$ on $N_{\alpha,\beta}(d)$ vertices satisfying
  \[
    \lambda I - A_{\wt{G}} + \mu J \succeq 0 \quad\text{and}\quad {\rank}{\left(\lambda I - A_{\wt{G}} + \mu J\right)} \le d,
  \]
  where $\lambda = (1-\alpha)/(\alpha - \beta)$ and $\mu = \alpha / (\alpha - \beta)$. By \cref{thm:structure} we obtain a constant $\Delta = \Delta(\alpha,\beta)$ such that the graph, denoted $G$, obtained from $\wt{G}$ by removing at most $\Delta$ vertices is a $\Delta$-modification of a complete $p$-partite graph, denoted $K$, where $p = \floor{1/(1-\mu)}$. Define the signed graph $G^\pm$ by $A_{G^\pm} = A_G - A_K$. Notice that the maximum degree of $G^\pm$ is at most $\Delta$, and $\chi(G^\pm) \le p$.
  
  Now the signed adjacency matrix of $G^\pm$ satisfies
  \[
    \lambda I - A_{G^\pm} + \mu J - A_K \succeq 0 \quad\text{and}\quad {\rank}{\left(\lambda I - A_{G^\pm} + \mu J - A_K\right)} \le d.
  \]
  Note that $\rank(\mu J - A_K) \le p$. From the first condition above, we deduce using the Courant--Fischer theorem that $\lambda_{p+1}(\lambda I - A_{G^\pm}) \ge 0$ or equivalently $\lambda_{p+1}(G^\pm) \le \lambda$. From the second condition above, we deduce using subadditivity of matrix ranks that $\rank(\lambda I - A_{G^\pm}) \le d + p$ or equivalently
  \begin{equation} \label{eqn:mlt-gpm}
    \mult(\lambda, G^\pm) \ge \abs{G^\pm} - (d+p).
  \end{equation}

  We break the rest of the proof into two cases.

  \medskip
  \noindent\textbf{Case $p = 1$.} The signed graph $G^\pm$ consists of positive edges only. \cref{lem:mult-bound} provides the upper bound
  \[
    \mult(\lambda, G^\pm) \le \begin{cases}
      \abs{G^\pm} / k(\lambda) + O_{\alpha,\beta}(1) & \text{if }k(\lambda) < \infty, \\
      o(\abs{G^\pm}) & \text{otherwise}.
    \end{cases}
  \]
  Combining with \cref{eqn:mlt-gpm}, we get
  \[
    \abs{G^\pm} - (d + p) \le \begin{cases}
      \abs{G^\pm}/k(\lambda) + O_{\alpha, \beta}(1) & \text{if }k(\lambda) < \infty, \\
      o(\abs{G^\pm}) & \text{otherwise},
    \end{cases}
  \]
  which implies
  \[
    \abs{G^\pm} \le \begin{cases}
      \displaystyle \frac{k(\lambda)d}{k(\lambda)-1} + O_{\alpha,\beta}(1) & \text{if }k(\lambda) < \infty, \\
      d + o(d) & \text{otherwise}.
    \end{cases}
  \]
  The desired upper bound on $N_{\alpha,\beta}(d)$ follows immediately in view of $\abs{G^\pm} \ge N_{\alpha,\beta}(d) - \Delta$.
  
  \medskip
  \noindent\textbf{Case $p \ge 2$.} Let $V_1$ and $V_2$ be the largest parts of the complete $p$-partite graph $K$. Let $G^\pm_{12}$ be the signed subgraph of $G^\pm$ induced on $V_1 \cup V_2$, and let $G_{12}$ be the underlying graph of $G_{12}^\pm$. Notice that $\abs{G_{12}} = \abs{V_1} + \abs{V_2} \ge 2\abs{G^\pm}/p$, and the maximum degree of $G_{12}$ is at most $\Delta$, and $\chi(G^\pm_{12}) \le 2$.
  Since $\chi(G^\pm_{12}) \le 2$, the signed graph $G_{12}^\pm$ is isospectral to its underlying graph $G_{12}$. It follows from \cref{lem:mult-bound} that
  \[
    \mult(\lambda, G_{12}^\pm) = \mult(\lambda, G_{12}) \le \begin{cases}
      \abs{G_{12}}/k(\lambda) + O_{\alpha,\beta}(1) & \text{if }k(\lambda) < \infty, \\
      o(\abs{G_{12}}) & \text{otherwise}.
    \end{cases}
  \]
  By the Cauchy interlacing theorem, we have
  \[
    \mult(\lambda, G^\pm) -  (\abs{G^\pm} - \abs{G_{12}}) \le \mult(\lambda, G_{12}^\pm).
  \]
  Combining \cref{eqn:mlt-gpm} and the above two inequalities, we get
  \[
    \abs{G_{12}} - (d + p) \stackrel{\cref{eqn:mlt-gpm}}{\le} \mult(\lambda, G^\pm) -  (\abs{G^\pm} - \abs{G_{12}}) \le \begin{cases}
      \abs{G_{12}}/k(\lambda) + O_{\alpha,\beta}(1) & \text{if }k(\lambda) < \infty, \\
      o(\abs{G_{12}}) & \text{otherwise},
    \end{cases}
  \]
  which implies
  \begin{equation*}
    \abs{G_{12}} \le \begin{cases}
      \displaystyle \frac{k(\lambda)d}{k(\lambda)-1} + O_{\alpha,\beta}(1) & \text{if }k(\lambda) < \infty, \\
      d + o(d) & \text{otherwise}.
    \end{cases}
  \end{equation*}
  The desired upper bound on $N_{\alpha,\beta}(d)$ follows immediately in view of the inequalities $\abs{G_{12}} \ge 2\abs{G^\pm}/p$ and $\abs{G^\pm} \ge N_{\alpha,\beta}(d) - \Delta$.
\end{proof}

As a corollary, we obtain the following general lower bound on $k_p(\lambda)$.

\begin{corollary} \label{prop:kp-lower-bound}
  For all $\lambda > 0$ and $p \ge 2$,
  \[
    k_p(\lambda) \ge \frac{pk(\lambda)}{pk(\lambda)-2\lambda}.
  \]
\end{corollary}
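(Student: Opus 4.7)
The plan is to lower bound $k_p(\lambda)$ by applying \cref{prop:upper-bound} to signed graphs that nearly attain the infimum defining $k_p(\lambda)$. Given $\eps > 0$, fix a signed graph $G^\pm$ with $\chi(G^\pm) \le p$, $\lambda_1(G^\pm) = \lambda$, and $\abs{G^\pm}/\mlt(\lambda, G^\pm) \le k_p(\lambda) + \eps$; if no such $G^\pm$ exists then $k_p(\lambda) = \infty$ and the bound is trivial. Before applying \cref{prop:upper-bound}, I would ensure that $G^\pm$ has bounded maximum degree. For this I would invoke \cref{prop:bounded-degree-family} to obtain a finite family $\mathcal H$ of signed graphs with $\lambda_1(H^\pm) > \lambda$ for each $H^\pm \in \mathcal H$, whose $\mathcal H$-avoidance forces maximum degree at most some $\Delta = \Delta(\lambda, p)$. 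By Cauchy interlacing, every induced subgraph of $G^\pm$ has spectral radius at most $\lambda_1(G^\pm) = \lambda$, so $G^\pm$ is automatically $\mathcal H$-free and hence has maximum degree at most $\Delta$.

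Next I would apply \cref{prop:upper-bound} with $j = 1$. Writing $t = \chi(G^\pm) \le p$ and noting $q = \max\set{1, t/2} \le p/2$ since $p \ge 2$, this yields
\[
  \mlt(\lambda, G^\pm) \le \left(1 - \frac{k(\lambda) - 1}{q\, k(\lambda)}\right) \abs{G^\pm} + O_{\lambda,p}(1) \le \frac{(p-2) k(\lambda) + 2}{p\, k(\lambda)} \abs{G^\pm} + O_{\lambda,p}(1).
\]
To absorb the additive $O(1)$ error, I would replace $G^\pm$ by the disjoint union $\ell G^\pm$ of $\ell$ copies: top eigenvalues and their multiplicities are additive under disjoint unions, so the ratio $\abs{\cdot}/\mlt(\lambda, \cdot)$ is unchanged while the size grows without bound (and the chromatic number and maximum degree hypotheses are preserved). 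Applying the displayed bound to $\ell G^\pm$ and letting $\ell \to \infty$ gives
\[
  k_p(\lambda) + \eps \ge \frac{p\, k(\lambda)}{(p-2) k(\lambda) + 2}.
\]

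To finish, I would weaken this to the corollary's form using the elementary inequality $k(\lambda) \ge \lambda + 1$, which holds because any $n$-vertex graph has spectral radius at most $n - 1$. This gives $(p-2) k(\lambda) + 2 \le p\, k(\lambda) - 2\lambda$, and hence $p\, k(\lambda)/((p-2) k(\lambda) + 2) \ge p\, k(\lambda)/(p\, k(\lambda) - 2\lambda)$. Sending $\eps \to 0$ then completes the proof. The only delicate step is the initial bounded-degree reduction via \cref{prop:bounded-degree-family} combined with Cauchy interlacing; once that is in place, the remainder is a routine simplification of \cref{prop:upper-bound} together with the standard disjoint-union trick to clear the $O(1)$ error term.
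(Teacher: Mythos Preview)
Your argument is correct and is essentially the ``direct'' proof the paper alludes to but does not write out. The paper's own proof is indirect: it fixes $\alpha,\beta$ realizing the given $\lambda$ and $p$, then compares the lower bound on $N_{\alpha,\beta}(d)$ from \cref{prop:lower-bound} against the upper bound from \cref{thm:upper-bound} (which in turn rests on the structural \cref{thm:struct-strong} and \cref{thm:signed-mult-upper}). From the resulting inequality
\[
\frac{k_p(\lambda)}{k_p(\lambda)-1}\le \frac{pk(\lambda)}{2(k(\lambda)-1)}
\]
one solves for $k_p(\lambda)$ and then weakens via $k(\lambda)\ge \lambda+1$, exactly as you do. Your approach bypasses the geometric detour entirely by feeding a near-extremal $G^\pm$ (and its disjoint unions) directly into \cref{prop:upper-bound}; the observation that $\lambda_1(G^\pm)=\lambda$ forces $G^\pm$ to be $\mathcal H$-free via interlacing is the one genuinely new ingredient you supply, and it neatly avoids the heavy structural machinery of \cref{sec:struct-proof}. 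Both routes land on the same intermediate inequality $k_p(\lambda)\ge pk(\lambda)/((p-2)k(\lambda)+2)$ before the final weakening; yours is more self-contained, while the paper's highlights the consistency between the upper and lower bounds on $N_{\alpha,\beta}(d)$. One small point: you should note (as the paper implicitly does) that when $k(\lambda)=\infty$ the claimed bound degenerates to $k_p(\lambda)\ge 1$, which is trivial.
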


\begin{proof}
  Comparing \cref{prop:lower-bound} and \cref{thm:upper-bound}, we get
  \begin{equation*}
    \frac{k_p(\lambda)d}{k_p(\lambda)-1} - o(d) \le \frac{pk(\lambda)d}{2(k(\lambda)-1)} + O_{p,\lambda}(1),
  \end{equation*}
  which implies the desired lower bound. (It is also not hard to prove \cref{prop:kp-lower-bound} directly, but we do not do so here.)
\end{proof}

\begin{figure}
  \centering
  \begin{minipage}[t]{0.5\textwidth}
    \centering
    \begin{tikzpicture}[scale=2, very thick]
      \draw (0,0) circle[radius=1];
      \foreach \r in {-30,90,210} {
        \draw[rotate=\r] (-40:1) -- (40:1);
        \draw[rotate=\r] (200:1) -- (0:1);
        \draw[rotate=\r] (0:1) -- (160:1);
        \draw[fill, rotate=\r] (40:1) circle[radius=0.03];
        \draw[fill, rotate=\r] (0:1) circle[radius=0.03];
        \draw[fill, rotate=\r] (-40:1) circle[radius=0.03];
      }
    \end{tikzpicture}
    \captionof{figure}{The Paley graph of order $9$.} \label{fig:paley-9}
  \end{minipage}%
  \begin{minipage}[t]{0.5\textwidth}
    \centering
    \begin{tikzpicture}[scale=2, very thick]
      \draw (0,0) circle[radius=1];
      \foreach \r in {0,45,90,135,180,225,270,315} {
        \draw[rotate=\r] (112.5:1) -- (22.5:1);
        \draw[rotate=\r] (22.5:1) -- (67.5:0.5);
        \draw[rotate=\r] (67.5:0.5) -- (112.5:1);
        \draw[rotate=\r] (112.5:0.5) -- (22.5:0.5);
        \draw[rotate=\r] (22.5:0.5) -- (157.5:0.5);
        \draw[fill, rotate=\r] (22.5:1) circle[radius=0.03];
        \draw[fill, rotate=\r] (22.5:0.5) circle[radius=0.03];
      }
    \end{tikzpicture}
    \captionof{figure}{The Shrikhande graph.} \label{fig:shrikhande}
  \end{minipage}
\end{figure}

\begin{remark}
  For general $\lambda$, we do not know any algorithm for computing $k(\lambda)$ (or even deciding whether $k(\lambda) <\infty$), though deciding whether $k(\lambda) < k$ for each integer $k$ is a finite problem as can be done by a brute-force search over all graphs up to a fixed size.

  When $\lambda \in \N$, we have $k(\lambda) = \lambda + 1$ because the complete graph $K_{\lambda + 1}$ is the graph on fewest vertices with spectral radius $\lambda$. In contrast, even for $\lambda \in \N$, computing the exact values of $k_p(\lambda)$ seems to be very difficult for $p \ge 3$. For $\lambda = 2$, \cref{prop:kp-lower-bound} implies that $k_3(2) \ge 9/5$ and $k_4(2) \ge 3/2$. Note that both the Paley graph of order $9$ in \cref{fig:paley-9} and the Shrikhande graph in \cref{fig:shrikhande} are strongly regular graphs with $-2$ as their smallest eigenvalue with multiplicity $4$ and $9$ respectively. Moreover their chromatic numbers are $3$ and $4$ respectively. The all-negative signed graphs of these two strongly regular graphs would yield $k_3(2) \le 9/4$ and $k_4(2) \le 16/9$. We leave the determination of $k_p(2)$ for $p \ge 3$ as an open problem.
\end{remark}

\cref{thm:main}\ref{thm:main-p} and \cref{thm:main}\ref{thm:main-lambda-1} follow easily from \cref{thm:upper-bound,prop:lower-bound}.

\begin{proof}[Proof of \cref{thm:main}\ref{thm:main-p}]
  Because $p \le 2$, we have $q = \max\set{1, p/2} = 1$ and $k_p(\lambda) = k(\lambda)$. Moreover, if $k(\lambda) < \infty$ then $k(\lambda)$ can be achieved for $k_p(\lambda)$ by the smallest graph whose spectral radius is exactly $\lambda$. Thus \cref{thm:upper-bound,prop:lower-bound} give matching bounds on $N_{\alpha, \beta}(d)$.
\end{proof}

\begin{proof}[Proof of \cref{thm:main}\ref{thm:main-lambda-1}]
  Because $\lambda = 1$ and $p \ge 2$, we have $k(\lambda) = 2$ and $q = \max(1, p/2) = p/2$. Thus \cref{thm:upper-bound} gives
  \begin{equation} \label{eqn:pd-plus-o1}
    N_{\alpha,\beta}(d) \le pd + O_{\alpha,\beta}(1).
  \end{equation}

  \cref{prop:kp-lower-bound} implies that $k_p(1) \ge p/(p-1)$. To see that $p/(p-1)$ can be achieved for $k_p(1)$, consider the all-negative complete signed graph $K_p^\pm$ on $p$ vertices. Clearly $\chi(K_p^\pm) = p$. Since the smallest eigenvalue of the complete unsigned graph $K_p$ is $-1$ with multiplicity $p-1$, the largest eigenvalue of $K_p^\pm$ is $1$ with multiplicity $p-1$. Now \cref{prop:lower-bound} provides a lower bound that matches \cref{eqn:pd-plus-o1} up to an additive constant.
\end{proof}

\section{Forbidden induced subgraphs} \label{sec:forbidden}

The next lemma enables us to forbid finitely many induced subgraphs in the signed graph that arises from \cref{thm:structure}. Here an \emph{induced subgraph} of a signed graph keeps the original edge signs.

\begin{lemma} \label{lem:additional-structure}
  Fix $\lambda > 0$, $\mu \in (0,1)$, $p \in \N$, and $\Delta \in \N$. For every signed graph $H^\pm$ with $\lambda_1(H^\pm) > \lambda$, there exists $n_0 \in \N$ such that for every $t \le p$ and every graph $G$ that is a $\Delta$-modification of a complete $t$-partite graph $K$, if $\lambda I - A_G + \mu J \succeq 0$, and the size of each part of $K$ is at least $n_0$, then $H^\pm$ cannot be an induced subgraph of the signed graph $G^\pm$ defined by $A_{G^\pm} = A_G - A_K$.
\end{lemma}

\begin{proof}
  Suppose that $G$ is a $\Delta$-modification of a complete $t$-partite graph $K$ with parts $\wt{V}_1, \dots, \wt{V}_t$, and suppose that the size of each part of $K$ is at least $n_0$. Assume for the sake of contradiction that $H^\pm$ with $\lambda_1(H^\pm) > \lambda$ is an induced subgraph of $G^\pm$. Take $n_0 = (\abs{H^\pm} + pm)\Delta$, where $m = \floor{\lambda\abs{H^\pm}/(\lambda_1(H^\pm) - \lambda)} + 1$. We can greedily find $V_1 \subseteq \wt{V}_1, \dots, V_t\subseteq \wt{V}_t$ such that
  \begin{enumerate}
    \item each $V_i$ is disjoint from $V(H^\pm)$ and has size $m$,
    \item $G$ induces a complete $t$-partite graph with parts $V_1, \dots, V_t$,
    \item for every vertex $v$ of $H^\pm$, if $v \in \wt{V}_i$, then, in $G$, the vertex $v$ is adjacent to every vertex in $V_j$ for $j \neq i$, and is not adjacent to any vertex in $V_i$.
  \end{enumerate}

  Let $\bm{x} \in \R^{V(H^\pm)}$ be a top eigenvector of $H^\pm$, and set
  \[
    s_i = \sum_{u \in V(H^\pm) \cap \wt{V}_i}\bm{x}_u.
  \]
  Note that $s_i^2 \le \abs{V(H^\pm) \cap \wt{V}_i} \bm{x}^\intercal \bm{x}$ for each $i$, which implies that
  \begin{equation} \label{eqn:sum-si}
    \sum_i s_i^2 \le \abs{H^\pm} \bm{x}^\intercal \bm{x}.
  \end{equation}
  Consider the vector $\bm{v} \in \R^{V(G)}$ extending $\bm{x}$ that in addition assigns $-s_i/m$ to each vertex in $V_i$ for $i \in \set{1, \dots, t}$. Since $\bm{v}$ is chosen so that $\sum_{u \in \wt{V}_i}\bm v_u = 0$ for each $i\in\set{1,\dots,t}$, we have $J\bm{v}=0$ and $A_K\bm{v}=0$.
  Now we can simplify the quadratic form as follows:
  \[\bm{v}^\T(\lambda I - A_G + \mu J)\bm{v} = \bm{v}^\T(\lambda I - A_{G^\pm} - A_K + \mu J)\bm{v} = \bm{v}^\T(\lambda I - A_{G^\pm})\bm{v}.\]
  Next, since no vertex in $H^\pm$ is adjacent to $V_1\cup\cdots\cup V_t$ in $G^\pm$, we have
  \begin{align*}
     \bm{v}^\T(\lambda I - A_{G^\pm})\bm{v} & =\bm{x}^\T(\lambda I - A_{H^\pm})\bm{x} + \lambda\sum_i m(s_i/m)^2 \\
    & = \left(\lambda - \lambda_1(H^\pm)\right)\bm{x}^\T\bm{x} + \lambda\sum_i s_i^2 / m \\ & \le \left(\lambda - \lambda_1(H^\pm) + \lambda\abs{H^\pm}/m\right)\bm{x}^\T\bm{x},\tag*{by \cref{eqn:sum-si}}
  \end{align*}
  which is negative because $m > \lambda\abs{H^\pm}/(\lambda_1(H^\pm) - \lambda)$. This contradicts $\lambda I-A_G+\mu J \succeq 0$.
\end{proof}

\cref{lem:additional-structure} leads us to bound eigenvalue multiplicities in a restricted class of signed graphs obtained by forbidding certain induced subgraphs.

\begin{definition} \label{def:M}
  Given a family $\mathcal H$ of signed graphs, let $M_{p, \mathcal H}(\lambda, N)$ be the maximum possible value of $\mult(\lambda, G^\pm)$ over all signed graphs $G^\pm$ on at most $N$ vertices that do not contain any member of $\mathcal H$ as an induced subgraph and satisfy $\chi(G^\pm) \le p$ and $\lambda_{p+1}(G^\pm) \le \lambda$.
\end{definition}

In our application, we will only be allowed to forbid a \emph{finite} $\mathcal H$ such that $\lambda_1(H^\pm) > \lambda$ for all $H^\pm \in \mathcal H$.

\begin{remark}
  We could choose $\mathcal H$ properly so that every signed graph $G^\pm$ considered in \cref{def:M} of $M_{p, \mathcal H}(\lambda, N)$ has its maximum degree bounded by a constant depending only on $p$ and $\lambda$. In fact, set $D = \floor{\lambda^2}$, and suppose that $\mathcal H$ includes all the signed graphs $H^\pm$ on $D+2$ vertices with $\chi(H^\pm) \le 2$ such that the underlying graph of $H^\pm$ contains the star $K_{1,D+1}$. One can then show that for every graph $G^\pm$ that does not contain any member of $\mathcal H$ as an induced subgraph, the maximum degree of $G^\pm$ is at most $\chi(G^\pm)D$.
\end{remark}

The next statement relates the maximum size of a spherical two-distance set with the above eigenvalue multiplicity quantity.

\begin{theorem} \label{thm:signed-mult-upper}
  Fix $-1 \le \beta < 0 \le \alpha < 1$.
  Set $\lambda = (1-\alpha)/(\alpha-\beta)$ and $p = \floor{-\alpha/\beta}+1$.
  Let $\mathcal H$ be a finite family of signed graphs with $\lambda_1(H^\pm) >\lambda$ for each $H^\pm \in \mathcal H$.
  Then
  \[
    N_{\alpha,\beta}(d) \le d + M_{p,\mathcal H}(\lambda, N_{\alpha,\beta}(d)) + O_{\alpha,\beta, \mathcal H}(1).
  \]
\end{theorem}

\begin{proof}
  In view of \cref{lem:code-graph}, consider a graph $\wt{G}$ on $N_{\alpha,\beta}(d)$ vertices satisfying
  \[
    \lambda I - A_{\wt{G}} + \mu J \succeq 0 \quad\text{and}\quad \rank(\lambda I - A_{\wt{G}} + \mu J ) \le d,
  \]
  where $\lambda = (1-\alpha)/(\alpha - \beta)$ and $\mu = \alpha/(\alpha-\beta)$. By \cref{lem:struct} we obtain a constant $\Delta = \Delta(\alpha, \beta)$ such that $\wt{G}$, after removing at most $\Delta$ vertices, is a $\Delta$-modification of a complete $p$-partite graph, where $p = \floor{1/(1-\mu)}$.

  Let $n_0 = n_0(\alpha, \beta, \mathcal{H})$ be the maximum $n_0$ given by \cref{lem:additional-structure} when it is applied to each member of $\mathcal H$ respectively with the parameters $\lambda$, $\mu$, $p$, and $\Delta$. After removing at most $\Delta$ vertices from $\wt{G}$, we can further remove at most $pn_0$ vertices from $\wt{G}$ to obtain a graph, denoted $G$, that is a $\Delta$-modification of a $t$-partite graph, denoted $K$, with each part of size at least $n_0$, for some $t \le p$. Define the signed graph $G^\pm$ by $A_{G^\pm} = A_G - A_K$. Since $\lambda I - A_G + \mu J \succeq 0$, by our choice of $n_0$, we know that the signed graph $G^\pm$ does not contain any member of $\mathcal H$ as an induced subgraph. Notice that $\chi(G^\pm) \le t \le p$.

  Now the signed adjacency matrix of $G^\pm$ satisfies
  \begin{subequations}
    \begin{gather}
      \lambda I - A_{G^\pm} + \mu J - A_K \succeq 0, \label{eqn:gpm-cond1} \\
      \rank(\lambda I - A_{G^\pm} + \mu J - A_K) \le d. \label{eqn:gpm-cond2}
    \end{gather}
  \end{subequations}
  Note that $\rank(\mu J - A_K) \le t \le p$. From \cref{eqn:gpm-cond1} we deduce using the Courant--Fischer theorem that $\lambda_{p+1}(\lambda I - A_{G^\pm}) \ge 0$ or equivalently $\lambda_{p+1}(G^\pm) \le \lambda$. Recall that $G^\pm$ has at most $N_{\alpha,\beta}(d)$ vertices, $G^\pm$ does not contain any member of $\mathcal H$ as an induced subgraph, and $\chi(G^\pm) \le p$. According to \cref{def:M},
  \[
    \mult(\lambda, G^\pm) \le M_{p, \mathcal H}(\lambda, N_{\alpha,\beta}(d)).
  \]
  
  From \cref{eqn:gpm-cond2} we deduce using subadditivity of matrix ranks that $\rank(\lambda I - A_{G^\pm}) \le d + p$ or equivalently
  \[
    \mult(\lambda, G^\pm) \ge \abs{G^\pm} - (d + p).
  \]
  Combining with $\abs{G^\pm} \ge N_{\alpha,\beta}(d) - \Delta - pn_0$, we get
  \begin{align*}
    N_{\alpha,\beta}(d) & \le \abs{G^\pm} + \Delta + pn_0 \\
    & \le d + \mult(\lambda,G^\pm) + \Delta + p(n_0+1) \\
    & \le d + M_{p, \mathcal H}(\lambda, N_{\alpha,\beta}(d)) + O_{\alpha,\beta,\mathcal H}(1). \qedhere
  \end{align*}
\end{proof}

For each value of $\lambda$ and $p$, if we could prove the following upper bound on the eigenvalue multiplicity, then it would imply \cref{conj:main} via \cref{thm:signed-mult-upper}.

\begin{conjecture} \label{conj:forbidden}
  For every $\lambda > 0$ and $p \in \N$, there exists a finite family $\mathcal{H}$ of signed graphs with $\lambda_1(H^\pm) >\lambda$ for each $H^\pm \in \mathcal H$ such that
  \[
    M_{p, \mathcal H}(\lambda, N) \le \begin{cases}
      N/k_p(\lambda)+o(N) \qquad & \text{if }k_p(\lambda) < \infty, \\
      o(N) & \text{otherwise}.
    \end{cases}
  \]
\end{conjecture}

We include the short deduction below that for each $\lambda > 0$ and $p \in \N$, \cref{conj:forbidden} implies \cref{conj:main}.
Though, for deducing \cref{thm:main}\ref{thm:main-triangle-counting} in the next section, we will prove each bound directly without resorting to \cref{conj:forbidden}, in order to give a slightly better error term of $O_{\alpha,\beta}(1)$ instead of $o(d)$.

\begin{proof}[Proof that \cref{conj:forbidden} implies \cref{conj:main} for each $\lambda > 0$ and $p\in \N$]
  Choose $\mathcal H$ as in \cref{conj:forbidden}.
  In the case when $k_p(\lambda) < \infty$, by \cref{thm:signed-mult-upper}, we have
  \[
  N_{\alpha,\beta}(d) \le d + M_{p, \mathcal H}(\lambda, N_{\alpha,\beta}(d)) + O_{\alpha, \beta}(1)
  \le d + \left(\frac{1}{k_p(\lambda)} + o(1)\right) N_{\alpha,\beta}(d).
  \]
  Therefore
  \[
  N_{\alpha,\beta}(d) \le \left( \frac{k_p(\lambda)}{k_p(\lambda) - 1} + o(1)\right) d,
  \]
  which matches the lower bound in \cref{prop:lower-bound}. The case of $k_p(\lambda) = \infty$ is similar.
\end{proof}

\section{Third moment argument} \label{sec:triangle-counting}

For $\lambda = \sqrt{3}$ and $p = 3$, we give a tight upper bound (verifying \cref{conj:forbidden}) on $\mult(\lambda, G^\pm)$ for those signed graphs $G^\pm$ in \cref{thm:signed-mult-upper}, which implies a tight upper bound on the corresponding $N_{\alpha,\beta}(d)$.

\begin{theorem} \label{prop:k3(sqrt3)}
  There exists a finite family $\mathcal H$ of signed graphs with $\lambda_1(H^\pm)> \sqrt{3}$ for each $H^\pm \in \mathcal H$ such that
  \[
    M_{3, \mathcal H}(\sqrt{3}, N) \le 3N/7.
  \]
\end{theorem}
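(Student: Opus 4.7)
The plan is to apply a third moment and triangle counting argument adapted to signed graphs with chromatic number at most $3$. Using \cref{prop:bounded-degree-family}, I first enlarge $\mathcal H$ so that every signed graph $G^\pm$ under consideration has maximum degree at most some constant $\Delta = \Delta(\sqrt 3, 3)$, thereby reducing to the bounded-degree setting. Further forbidden subgraphs will be appended to $\mathcal H$ during the proof to exclude problematic local configurations.

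Fix a valid $3$-coloring $V_1 \sqcup V_2 \sqcup V_3$ of $G^\pm$, so positive edges lie within color classes and negative edges cross between them. Each triangle of $G^\pm$ has edge-sign product either $+1$ (entirely within one class, or split $2$+$1$ with exactly two negative edges) or $-1$ (a rainbow triangle with one vertex per class, all three edges negative). Writing $T_+$ and $T_-$ for the respective triangle counts, this yields
$$
  \tr\bigl((A^\pm)^3\bigr) = \sum_i \lambda_i^3 = 6(T_+ - T_-).
$$
Combined with $\tr(A^\pm) = 0$, this rewrites as $\sum_i \lambda_i(\lambda_i^2 - 3) = 6(T_+ - T_-)$, and every eigenvalue in $\{-\sqrt 3, 0, \sqrt 3\}$ contributes nothing to the left-hand side.

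Next, I would extract a spectral inequality by testing $A^\pm$ against a carefully chosen polynomial. A natural candidate is $q(x) = (x-\sqrt 3)^2(x+\sqrt 3)$, which is nonnegative on $[-\sqrt 3,\infty)$ and vanishes at $\pm \sqrt 3$; then $\tr(q(A^\pm))$ combines $T_+ - T_-$, $\abs{E}$, and $N$ into a single one-sided inequality. The at-most-three eigenvalues exceeding $\sqrt 3$ contribute $O_\Delta(1)$, the second moment $\tr((A^\pm)^2) = 2\abs{E} \le \Delta N$ supplies additional linear-in-$N$ control, and any eigenvalue strictly below $-\sqrt 3$ (which would contribute with the wrong sign) is ruled out by appending to $\mathcal H$ further small induced signed subgraphs whose top eigenvalue exceeds $\sqrt 3$.

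The crux is a local triangle-counting analysis at each vertex $v \in V_i$: classify the triangles through $v$ according to the color-class distribution and edge-signs of the other two vertices, and enumerate the possible local neighborhood patterns. Each pattern whose local contribution to $T_+ - T_-$ would exceed the target ratio $3/7$ extends to a small signed graph with top eigenvalue strictly greater than $\sqrt 3$; add these to $\mathcal H$. Summing the resulting vertex-by-vertex bounds and combining with the spectral inequality should yield $\mlt(\sqrt 3, G^\pm) \le 3N/7 + O_\Delta(1)$, matching the extremal $7$-vertex signed graph with multiplicity $3$ that underlies $k_3(\sqrt 3) = 7/3$.

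The main obstacle is the combinatorial case analysis. Unlike the $p \le 2$ regime treated in \cref{thm:main}\ref{thm:main-p} where every triangle is positive, here positive and rainbow triangles partially cancel in $T_+ - T_-$ and must be controlled simultaneously. Identifying the precise finite list of forbidden local configurations --- each requiring verification that its top eigenvalue strictly exceeds $\sqrt 3$, so that it is a legitimate member of $\mathcal H$ --- is the delicate step where the tight constant $3/7$ emerges.
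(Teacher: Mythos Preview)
Your proposal has several genuine gaps. First, the claim that eigenvalues below $-\sqrt 3$ can be ``ruled out by appending to $\mathcal H$ further small induced signed subgraphs whose top eigenvalue exceeds $\sqrt 3$'' is false: the all-negative triangle has $\lambda_1 = 1 \le \sqrt 3$ (so it is not a legal member of $\mathcal H$) but smallest eigenvalue $-2$, and signed graphs built from such triangles can have eigenvalues well below $-\sqrt 3$ without containing any induced subgraph with $\lambda_1 > \sqrt 3$. Second, your polynomial $q(x) = (x-\sqrt 3)^2(x+\sqrt 3)$ vanishes at $\pm\sqrt 3$, so $\tr(q(A^\pm))$ carries no information about $\mlt(\sqrt 3, G^\pm)$; you never explain how the multiplicity is supposed to enter. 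The phrase ``local contribution to $T_+ - T_-$ would exceed the target ratio $3/7$'' conflates a triangle count with an eigenvalue multiplicity, and there is no mechanism linking the two in your sketch. Third, you miss a key simplification: once $\mathcal H$ contains all signed graphs on at most $5$ vertices with $\lambda_1 > \sqrt 3$, every triangle in $G^\pm$ is all-negative (any triangle with sign product $+1$ has chromatic number at most $2$, hence the spectrum of $K_3$, hence $\lambda_1 = 2$), so $T_+ = 0$ and the cancellation you worry about never occurs.

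The paper's argument is structured quite differently. It takes a minimal counterexample, checks by brute force that $\abs{G^\pm} \ge 9$, and proves a rigidity statement around any induced $K_{1,3}$: its center has no further neighbors, and any vertex adjacent to the leaves hits exactly two of them (Claim~2). From this the maximum degree is at most $4$ (Claim~3), which bounds all eigenvalues in $[-4,4]$ and replaces your attempt to exclude eigenvalues below $-\sqrt 3$. The third moment is then used not to bound the multiplicity directly, but only to force an induced $K_{1,3}$ to exist (Claim~4): one evaluates $\sum_i(\lambda_i^3 + 2\lambda_i^2 - 7\lambda_i)$, notes that $\pm\sqrt 3$ occur with equal multiplicity $> 3N/7$ and together contribute $12$ per pair, while every eigenvalue in $[-4,\infty)$ contributes at least $-4$ since $\lambda^3 + 2\lambda^2 - 7\lambda = (\lambda-1)^2(\lambda+4) - 4$; absence of an induced $K_{1,3}$ would force each vertex into at least $d_v - 2$ (all-negative) triangles, making the same sum at most $4N$, a contradiction. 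Once the induced $K_{1,3}$ is located, a short case analysis on the distance layers $L_2$ and $L_3$ from its center, repeatedly invoking Claim~2, shows $\abs{G^\pm} \le 8$ and finishes the proof.
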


\begin{proof}
  Let $\mathcal H$ be the family of all the signed graphs $H^\pm$ on at most $5$ vertices with $\lambda_1(H^\pm) > \sqrt{3}$. For the sake of contradiction, assume that $G^\pm$ is a signed graph with the minimum number of vertices such that $\chi(G^\pm) \le 3$, no member of $\mathcal H$ is an induced subgraph of $G^\pm$, and $\mult(\sqrt{3}, G^\pm) > 3\abs{G^\pm}/7$. By our choice of $\mathcal H$, every subgraph of $G^\pm$ induced by at most $5$ vertices has largest eigenvalue at most $\sqrt{3}$. Note that $G^\pm$ is connected by its minimality. Let $V(G^\pm) = V_1 \sqcup V_2 \sqcup V_3$ be a valid $3$-coloring of $G^\pm$ allowing some $V_i$'s being empty, and let $G$ be the underlying graph of $G^\pm$. The next four claims reveal the local structure of $G^\pm$.

  \begin{claim} \label{claim:triangle}
    The edges of every triangle in $G$ are all negative in $G^\pm$.
  \end{claim}

  \begin{claimproof}[Proof of \cref{claim:triangle}]
    Since $\chi(G^\pm)$ is finite, every signed triangle in $G^\pm$, other than the all negative one, contains $0$ or $2$ negative edges. In either case, the chromatic number of the signed triangle is $2$, hence its largest eigenvalue equals $\lambda_1(K_3) = 2$. However, every induced triangle of $G^\pm$ has largest eigenvalue at most $\sqrt{3}$.
  \end{claimproof}

  \begin{claim} \label{claim:local-3-star}
    If $G$ induces a star on $\set{v_0, v_1, v_2, v_3}$ centered at $v_0$, then $v_1, v_2, v_3$ are the only neighbors of $v_0$ in $G$, and moreover for every $w \neq v_0$ that is adjacent to at least one of $v_1, v_2, v_3$, exactly two of $v_1, v_2 ,v_3$ are adjacent to $w$ in $G$.
  \end{claim}

  \begin{claimproof}[Proof of \cref{claim:local-3-star}] Let $w \in V(G) \setminus \set{v_0, v_1, v_2, v_3}$ be a vertex that is adjacent to at least one of $v_0, v_1, v_2, v_3$, and consider the vector $\bm{v} \in \R^{W}$, where $W = \set{v_0,v_1,v_2,v_3,w}$, that assigns $\sqrt{3}$ to $v_0$, $\sigma(v_0v_i)$ to $v_i$ for $i \in \set{1,2,3}$, $\eps$ to $w$, where $\sigma\colon E(G) \to \set{\pm 1}$ is the signing of $G^\pm$ and $\eps \in \R$. According to our choice of $\bm v$, we have
  \[
    \bm{v}^\T A_{G^\pm[W]}\bm{v} = 6\sqrt3 + 2\eps\sum_{v_iw\in E(G)}\sigma(v_iw)\bm v_{v_i}.
  \]
  By the Courant--Fischer theorem, we also have
  \[
    \bm{v}^\T A_{G^\pm[W]}\bm{v} \le \lambda_1(G^\pm[W])\bm v^\T\bm v \le \sqrt{3}(6 + \eps^2).
  \]
  For the last inequality to hold for all $\eps \in \R$, we must have
  \[
    \sum_{v_iw\in E(G)}\sigma(v_iw)\bm v_{v_i} = 0,
  \]
  which implies that $v_0w \not\in E(G)$, and exactly two of $v_1, v_2 ,v_3$ are adjacent to $w$ in $G$.
  \end{claimproof}

  \begin{claim} \label{claim:max-degree-4}
    The maximum degree of $G$ is at most $4$.
  \end{claim}

  \begin{claimproof}[Proof of \cref{claim:max-degree-4}]
  Suppose on the contrary that $v_0$ is adjacent to at least $5$ vertices in $G$. Without loss of generality we may assume that $v_0 \in V_1$, and by the pigeonhole principle that $3$ neighbors, say $v_1, v_2, v_3$, of $v_0$ are in $V_1 \cup V_2$. As $\chi(H^\pm) \le 2$, where $H^\pm := G^\pm[\set{v_0, v_1, v_2, v_3}]$, by \cref{claim:triangle}, $H^\pm$ contains no triangles. Thus $G$ induces a star on $\set{v_0,v_1,v_2,v_3}$ centered at $v_0$, and so by \cref{claim:local-3-star}, $v_0$ has no neighbors other than $v_1, v_2, v_3$ in $G$, which leads to a contradiction.
  \end{claimproof}

  \begin{claim} \label{claim:induced-3-star}
    The underlying graph $G$ contains an induced star $K_{1,3}$.
  \end{claim}

  \begin{claimproof}[Proof of \cref{claim:induced-3-star}]
  Suppose on the contrary that $G$ does not contain any induced $K_{1,3}$. For every $v \in V(G)$, the subgraph of $G$ induced by the neighbors of $v$ contains no independent set of size $3$, in particular, this induced subgraph contains at most $2$ connected components, hence it contains at least $d_v - 2$ edges, where $d_v$ is the degree of $v$ in $G$. In other words, every $v \in V(G)$ is contained in at least $d_v - 2$ triangles.

  Recall from \cref{claim:local-3-star} that every triangle in $G$ has all its edges negatively signed. Let $\lambda_1, \lambda_2, \dots, \lambda_n$ be the eigenvalues of $G^\pm$, where $n = \abs{G^\pm}$, and let $t$ be the total number of triangles in $G$. Thus we have
  \[
    -\sum_i \lambda_i^3 = -\tr(A_{G^\pm}^3) = 6t \ge 2\sum_v (d_v - 2).
  \]
  Note that
  \[
    \sum_i \lambda_i^2 = \tr(A_{G^\pm}^2) = \sum_v d_v
    \qquad\text{and}\qquad
    \sum_i \lambda_i = \tr(A_{G^\pm}) = 0.
  \]
  Thus we have
  \[
    \sum_i (\lambda_i^3 + 2\lambda_i^2 - 7 \lambda_i) \le -2\sum_v (d_v - 2) + 2\sum_v d_v = 4n.
  \]
  Since the characteristic polynomial of $A_{G^\pm}$ is a polynomial with integer coefficients, we obtain $\mult(-\sqrt{3}, G^\pm) = \mult(\sqrt{3}, G^\pm)$, which is more than $3n/7$. For other eigenvalues $\lambda_i$, by \cref{claim:max-degree-4}, we know that $\lambda_i \ge -4$, and so
  \[
    \lambda_i^3 + 2\lambda_i^2 - 7 \lambda_i = (\lambda_i - 1)^2(\lambda_i + 4) - 4 \ge -4.
  \]
  Therefore
  \[
    \sum_i (\lambda_i^3 + 2\lambda_i^2 - 7 \lambda_i) > \frac{3n}{7}\cdot 2 \cdot 2(\sqrt3)^2 + \frac{n}{7}\cdot (-4) = \frac{32n}{7} > 4n,
  \]
  which is a contradiction.
  \end{claimproof}
  
  The following claim imposes restriction on $G^\pm$ with small number of vertices.

  \begin{claim} \label{claim:v-6-8}
    The number $n$ of vertices in $G$ is either $6$ or at least $8$. Moreover, if $n \in \set{6,8}$ then $G$ is a $3$-regular graph, and the signed adjacency matrix of $G^\pm$ satisfies $A_{G^\pm}^2 = 3I$.
  \end{claim}
  
  \begin{claimproof}
    \cref{claim:induced-3-star} shows that $n \ge 4$, and moreover when $n = 4$, $G$ is precisely $K_{1,3}$, in which case $\mult(\sqrt3, G^\pm) = \mult(\sqrt3, G) = 1 \le 3n/7$. Thus $n \ge 5$. Because $\mult(-\sqrt3, G^\pm) = \mult(\sqrt3, G^\pm) > 3n/7$, we obtain
    \begin{equation} \label{eqn:mult-sqrt3-ineq}
      n \ge \mult(\sqrt3, G^\pm) + \mult(-\sqrt3, G^\pm) \ge 2(\floor{3n/7}+1),
    \end{equation}
    which rules out $n=5$ and $n=7$. Therefore $n = 6$ or $n \ge 8$. Suppose that $n \in \set{6,8}$. Note that equality must hold for \cref{eqn:mult-sqrt3-ineq}. Thus $\mult(-\sqrt3, G^\pm) = \mult(\sqrt3, G^\pm) = n/2$, which implies that $\mult(3, A_{G^\pm}^2) = n$. Hence $A_{G^\pm}^2 = 3I$, which in particular implies that $G$ is a $3$-regular graph.
  \end{claimproof}

  Suppose $G$ induces a star on $\set{v_0, v_1, v_2, v_3}$ centered at $v_0$. Let $L_i$ be the set of vertices at distance $i$ from $v_0$ in $G$. From \cref{claim:local-3-star}, we know that $L_1 = \set{v_1, v_2, v_3}$, and moreover every $w \in L_2$ is adjacent to exactly two among $v_1, v_2, v_3$. Because $\abs{G} \ge 6$, it must be the case that $L_2 \neq \varnothing$. We break the rest of the proof into two cases.

  \medskip
  \noindent\textbf{Case $\abs{L_2} = 1$.} Suppose $L_2 = \set{w}$. By \cref{claim:local-3-star}, without loss of generality, $w$ is adjacent to $v_1$ and $v_2$. Because $\abs{G} \ge 6$, it must be the case that $L_3 \neq \varnothing$. Take any $w' \in L_3$. Note that $G$ induces a star on $\set{w, v_1, v_2, w'}$ centered at $w$. By \cref{claim:local-3-star}, $L_3 = \set{w'}$ and $L_4 = \varnothing$, which implies $\abs{G} = 6$. By \cref{claim:v-6-8}, $G$ is a $3$-regular graph, which is a contradiction.

  \medskip
  \noindent\textbf{Case $\abs{L_2} \ge 2$.} For every two $w_1, w_2 \in L_2$, we claim that they do not have the same pairs of neighbors in $L_1$.
  Indeed, suppose on the contrary that both $w_1$ and $w_2$ are, without loss of generality, adjacent to $v_1$ and $v_2$ in $L_1$. Since $v_2$ is adjacent to $v_0, w_1, w_2$, by \cref{claim:local-3-star}, $G$ does not induce a star on $\set{v_0, v_1, w_1, w_2}$ centered at $v_1$, and so $w_1w_2 \in E(G)$. Now we have two triangles $w_1w_2v_1$ and $w_1w_2v_2$, which by \cref{claim:triangle} all have negative edges. Thus $v_1$ and $v_2$ are in the same part of the valid $3$-coloring. 
  Let $H^\pm:= G^\pm[v_0,v_1,v_2,w_1]$. Then $\chi(H^\pm) \le 2$ and $H^\pm$ is a signed $4$-cycle. Thus $\lambda_1(H^\pm) = \lambda_1(C_4) = 2$, where $C_4$ denotes the $4$-cycle, contradicting to $\lambda_1(H^\pm) \le \sqrt3$.
  
  Assume for a moment that $\abs{G} = 6$. In this subcase, $\abs{L_2} = 2$ and $L_3 = \varnothing$, and so the degree of every vertex in $L_2$ is $2$. By \cref{claim:v-6-8}, $G$ is a $3$-regular graph, which is a contradiction. Hereafter $\abs{G} \ge 8$.
  
  Because no two vertices in $L_2$ have the same pairs of neighbors in $L_1$, $\abs{L_2} \le \binom{3}{2} = 3$. Because $\abs{G} \ge 8$, it must be the case that $L_3 \neq \varnothing$. Take $w_1 \in L_2$ and $w' \in L_3$ such that $w_1w' \in E(G)$. Without loss of generality, suppose that $w_1$ is adjacent to $v_1$ and $v_2$. Since $G$ induces a star on $\set{v_1, v_2, w_1, w'}$ centered at $w_1$, by \cref{claim:local-3-star}, $w'$ is the only neighbor of $w_1$ in $L_3$, and $w'$ has no neighbor in $L_4$. Now take an arbitrary vertex $w_2 \in L_2 \setminus \set{w_1}$. Since $w_1$ and $w_2$ do not have the same pairs of neighbors in $L_1$, the vertex $w_2$ is adjacent to only one of $v_1$ and $v_2$, and so $w_2w' \in E(G)$ by \cref{claim:local-3-star}. We can apply the previous argument to $w_2$ in place of $w_1$, and conclude that $w'$ is the only neighbor of $w_2$ in $L_3$. Since $w_2 \in L_1 \setminus \set{w_1}$ was chosen arbitrarily, we know that $L_3 = \set{w'}$ and $L_4 = \varnothing$, which implies $\abs{L_2} = 3$ and $\abs{G} = 8$.
  
  Since $G$ is a $3$-regular graph by \cref{claim:v-6-8}, it is easy to see that $G$ must be the cubical graph. In view of \cref{claim:v-6-8}, $G^\pm$ is a signed cube that satisfies $A_{G^\pm}^2 = 3I$, which means that every square of $G^\pm$ contains odd number of negative edges. Because $\chi(G^\pm) \le 3$, $G^\pm$ has no cycle with exactly one negative edge, and in particular every square of $G^\pm$ contains exactly one positive edge. At this point, it is not hard to deduce that $G^\pm$ is exactly $H_3^\pm$ in \cref{fig:signed-h3}. However $\chi(H_3^\pm) = 4$, which is a contradiction.
\end{proof}

\begin{proof}[Proof of \cref{thm:main}\ref{thm:main-triangle-counting}]
  \cref{thm:signed-mult-upper,prop:k3(sqrt3)} give
  \[
    N_{\alpha, \beta}(d) \le d + 3N_{\alpha, \beta}(d)/7 + O_{\alpha,\beta}(1),
  \]
  which implies 
  \begin{equation} \label{eqn:pd-plus-o2}
    N_{\alpha, \beta}(d) \le 7d/4 + O_{\alpha,\beta}(1).
  \end{equation}
  Comparing with \cref{prop:lower-bound}, we get
  \[
    \frac{k_3(\sqrt{3})d}{k_3(\sqrt{3})-1} - o(d) \le \frac{7d}{4} + O_{\alpha,\beta}(1),
  \]
  which implies that $k_3(\sqrt{3}) \ge 7/3$. One can check that the signed graph $H_3^\pm$ in \cref{fig:signed-h3} satisfies $$A_{H_3^\pm}^2 = 3I,$$ and so $\mult(\sqrt{3}, H_3^\pm) = \mult(-\sqrt{3}, H_3^\pm) = 4$. By the Cauchy interlacing theorem, the signed graph $\hat{H}_3^\pm$ in \cref{fig:signed-h3-minus}, which is an induced subgraph of $H_3^\pm$ on $7$ vertices, satisfies $\mult(\sqrt{3}, \hat{H}_3^\pm) = \mult(-\sqrt{3}, \hat{H}_3^\pm) = 3$. Moreover $\chi(\hat{H}_3^\pm) = 3$. Therefore $7/3$ can be achieved for $k_3(\sqrt{3})$ by $\hat{H}_3^\pm$. Now $k_3(\sqrt3) = 7/3$, and \cref{prop:lower-bound} provides a lower bound that matches \cref{eqn:pd-plus-o2} up to an additive constant.
\end{proof}

\begin{figure}
  \centering
    \begin{minipage}[t]{0.5\textwidth}
    \centering
    \begin{tikzpicture}[very thick]
      \draw (0,0) -- (1,0);
      \draw (0,1) -- (0.5,1.5);
      \draw (1.5,0.5) -- (1.5,1.5);
      \draw[dashed] (0,0) -- (0,1) -- (1,1) -- (1,0) -- (1.5,0.5) -- (0.5,0.5) -- cycle;
      \draw[dashed] (0.5,0.5) -- (0.5,1.5) -- (1.5,1.5) -- (1,1);
      \draw[fill] (0,0) circle[radius=.075];
      \draw[fill] (1,0) circle[radius=.075];
      \draw[fill] (0,1) circle[radius=.075];
      \draw[fill] (1,1) circle[radius=.075];
      \draw[fill] (0.5,0.5) circle[radius=.075];
      \draw[fill] (1.5,0.5) circle[radius=.075];
      \draw[fill] (0.5,1.5) circle[radius=.075];
      \draw[fill] (1.5,1.5) circle[radius=.075];
    \end{tikzpicture}
    \captionof{figure}{$H_3^\pm$} \label{fig:signed-h3}
    \end{minipage}%
    \begin{minipage}[t]{0.5\textwidth}
    \centering
    \begin{tikzpicture}[very thick]
      \draw (0,0) -- (1,0);
      \draw (0,1) -- (0.5,1.5);
      \draw[dashed] (0,0) -- (0,1) -- (1,1) -- (1,0) -- (1.5,0.5) -- (0.5,0.5) -- cycle;
      \draw[dashed] (0.5,0.5) -- (0.5,1.5);
      \draw[fill] (0,0) circle[radius=.075];
      \draw[fill] (1,0) circle[radius=.075];
      \draw[fill] (0,1) circle[radius=.075];
      \draw[fill] (1,1) circle[radius=.075];
      \draw[fill] (0.5,0.5) circle[radius=.075];
      \draw[fill] (1.5,0.5) circle[radius=.075];
      \draw[fill] (0.5,1.5) circle[radius=.075];
    \end{tikzpicture}
    \captionof{figure}{$\hat{H}_3^\pm$} \label{fig:signed-h3-minus}
    \end{minipage}
\end{figure}

\section{Algebraic degree argument} \label{sec:algebraic}

We use the following simple observation to derive the asymptotic formula of $N_{\alpha,\beta}(d)$ in the $k_p(\lambda) = \deg(\lambda)$ case, where $\deg(\lambda)$ denotes the algebraic degree of $\lambda$. In particular, the results in this section confirm \cref{conj:main} when $\lambda \in \{\sqrt{2}, \sqrt{3}\}$ and $p \ge \lambda^2 + 1$.

\begin{proposition} \label{prop:kp-at-least-degree}
  For every algebraic integer $\lambda > 0$ and every signed graph $G^\pm$,
  \[
    \mult(\lambda, G^\pm) \le \abs{G^\pm} / \deg(\lambda).
  \]
  In particular, $k_p(\lambda) \ge \deg(\lambda)$ for all $p \in \N$.
\end{proposition}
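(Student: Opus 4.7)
The plan is to exploit the fact that $A_{G^\pm}$ is an integer matrix, so its characteristic polynomial $\chi_{G^\pm}(x) := \det(xI - A_{G^\pm})$ is monic with integer coefficients, together with the standard Galois-type fact that roots of an irreducible factor of such a polynomial all appear with the same multiplicity.

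First I would set $k = \mlt(\lambda, G^\pm)$. Since $A_{G^\pm}$ is real symmetric and hence diagonalizable, $k$ equals the multiplicity of $\lambda$ as a root of $\chi_{G^\pm}(x)$. Next, let $m_\lambda(x) \in \mathbb{Z}[x]$ denote the minimal polynomial of $\lambda$ over $\mathbb{Q}$; this is monic, irreducible of degree $\deg(\lambda)$, and has integer coefficients because $\lambda$ is an algebraic integer (a fact that is automatic here since $\lambda$ is a root of the monic integer polynomial $\chi_{G^\pm}$).

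The core step is to factor $\chi_{G^\pm}(x) = \prod_i f_i(x)^{e_i}$ into distinct monic irreducibles $f_i \in \mathbb{Z}[x]$ (using Gauss's lemma to ensure the factorization in $\mathbb{Q}[x]$ refines to $\mathbb{Z}[x]$). Since $m_\lambda$ is irreducible and $m_\lambda(\lambda)=0$, exactly one of the $f_i$'s equals $m_\lambda$, and the multiplicity of $\lambda$ as a root of $\chi_{G^\pm}$ is the corresponding exponent $e_i$; therefore $e_i = k$ and $m_\lambda^k \mid \chi_{G^\pm}$ in $\mathbb{Z}[x]$. Comparing degrees yields $k \cdot \deg(\lambda) \le \deg(\chi_{G^\pm}) = \abs{G^\pm}$, which is the desired inequality. (Equivalently, every Galois conjugate of $\lambda$ is an eigenvalue of $A_{G^\pm}$ with multiplicity at least $k$.)

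For the second assertion, I would simply observe that for every signed graph $G^\pm$ with $\chi(G^\pm) \le p$ and $\lambda_1(G^\pm) = \lambda$, the ratio $\abs{G^\pm}/\mlt(\lambda, G^\pm)$ is bounded below by $\deg(\lambda)$ by the first part, so the infimum defining $k_p(\lambda)$ satisfies the same bound. There is no real obstacle here; the only subtlety worth stating explicitly is the passage from "$\lambda$ has multiplicity $k$" to "$m_\lambda^k \mid \chi_{G^\pm}$", which rests on the fact that an irreducible factor of an integer-coefficient polynomial contributes the same exponent to every one of its roots.
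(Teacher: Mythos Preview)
Your argument is correct and is essentially the same as the paper's: both observe that the characteristic polynomial of $A_{G^\pm}$ has integer coefficients, so the minimal polynomial $m_\lambda$ divides it to the power $\mlt(\lambda,G^\pm)$ (equivalently, all Galois conjugates of $\lambda$ appear with equal multiplicity), and then compare degrees. You have simply spelled out the details that the paper's one-line proof leaves implicit.
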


\begin{proof}
  If $\lambda$ is an eigenvalue of a signed graph $G^\pm$ then each of its  conjugates must also appear with equal multiplicity as eigenvalues of $G^\pm$. Hence $\mult(\lambda, G^\pm)\deg(\lambda) \le \abs{G^\pm}$.
\end{proof}

\begin{proposition} \label{prop:deg-upper-bound}
  For $-1 \le \beta < 0 \le \alpha < 1$, set $\lambda = (1-\alpha)/(\alpha-\beta)$ and $p = \floor{-\alpha/\beta}+1$. If $\lambda$ is an algebraic integer of degree at least $2$, then
  \[
  N_{\alpha,\beta}(d) \le \frac{\deg(\lambda)(d+1)}{\deg(\lambda)-1}.
  \]
  If in addition $k_p(\lambda) = \deg(\lambda)$ and is achievable, then
  \[
    N_{\alpha,\beta}(d) = \frac{\deg(\lambda)d}{\deg(\lambda)-1} + O_{\alpha,\beta}(1).
  \]
\end{proposition}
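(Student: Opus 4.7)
The plan is to combine the graph-theoretic reformulation of \cref{lem:code-graph} with the algebraic-degree bound \cref{prop:kp-at-least-degree}, bridging them by a Cauchy interlacing argument that exploits the rank-one nature of $\mu J$. Unlike the deductions earlier in the paper, we will avoid invoking the structural \cref{thm:struct-strong}, since it loses additive $O_{\alpha,\beta}(1)$ terms on the spot and would not produce the cleaner ``$d+1$'' in the stated bound.

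Let $N := N_{\alpha,\beta}(d)$ and $\mu := \alpha/(\alpha-\beta) \in [0,1)$. By \cref{lem:code-graph} there is an (unsigned) graph $\wt G$ on $N$ vertices with $M := \lambda I - A_{\wt G} + \mu J \succeq 0$ and $\rnk(M) \le d$, so $M$ has at least $N-d$ eigenvalues equal to $0$. Since $\mu J \succeq 0$ has rank at most $1$, Cauchy interlacing for a rank-one PSD perturbation gives
\[
  \lambda_i(M) \ge \lambda_i(\lambda I - A_{\wt G}) \ge \lambda_{i+1}(M) \qquad \text{for } 1 \le i \le N-1.
\]
For each $d+1 \le i \le N-1$ both $\lambda_i(M)$ and $\lambda_{i+1}(M)$ vanish, forcing $\lambda_i(\lambda I - A_{\wt G}) = 0$; equivalently, $\mlt(\lambda, \wt G) \ge N - d - 1$. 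Viewing $\wt G$ as a signed graph with only positive edges, \cref{prop:kp-at-least-degree} gives $\mlt(\lambda, \wt G) \le N/\deg(\lambda)$. Combining
\[
  N - d - 1 \le \frac{N}{\deg(\lambda)}
\]
and rearranging yields the first claimed inequality $N \le \deg(\lambda)(d+1)/(\deg(\lambda)-1)$, which is nonvacuous precisely because $\deg(\lambda) \ge 2$.

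For the asymptotic formula, assume in addition that $k_p(\lambda) = \deg(\lambda)$ and that this infimum is achievable. Then \cref{prop:lower-bound} supplies the matching lower bound $N_{\alpha,\beta}(d) \ge \deg(\lambda) d/(\deg(\lambda) - 1) - O_{\alpha,\beta}(1)$. Since the upper bound just proved exceeds this expression only by an additive constant depending on $\lambda$, we conclude $N_{\alpha,\beta}(d) = \deg(\lambda) d/(\deg(\lambda) - 1) + O_{\alpha,\beta}(1)$.

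The argument is essentially a packaging of existing ingredients, so I do not anticipate a substantive obstacle; the only subtlety is bookkeeping the direction of the Cauchy interlacing, where the single rank-one PSD perturbation $\mu J$ costs exactly one eigenvalue in the multiplicity count and is responsible for the ``$+1$'' in $\deg(\lambda)(d+1)/(\deg(\lambda)-1)$. In particular, no forbidden-subgraph family $\mathcal H$ or bounded-degree reduction is needed for this case, in contrast to the proofs of the other parts of \cref{thm:main}.
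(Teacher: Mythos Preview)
Your proof is correct and follows essentially the same route as the paper: invoke \cref{lem:code-graph}, lose one from the rank-one matrix $\mu J$ to get $\mlt(\lambda,\wt G)\ge N-d-1$, then apply \cref{prop:kp-at-least-degree} and, for the second claim, \cref{prop:lower-bound}. The only cosmetic difference is that the paper obtains $\mlt(\lambda,\wt G)\ge N-d-1$ in one line via rank subadditivity, $\rnk(\lambda I-A_{\wt G})\le\rnk(M)+\rnk(\mu J)\le d+1$, rather than your Cauchy interlacing argument; both are valid and yield the identical inequality.
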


\begin{proof}
  By \cref{lem:code-graph}, we see that if $G$ is the graph associated to a spherical $\set{\alpha,\beta}$-code of size $N$ in $\R^d$, then, setting $\mu = \alpha/(\alpha - \beta)$ as in \cref{lem:code-graph}, we have
  \[
  d \ge \rank(\lambda I - A_G + \mu J) \ge \rank(\lambda I - A_G) - 1
  = N - \mult(\lambda, G) - 1
  \ge \left(1 - \frac{1}{\deg(\lambda)}\right) N - 1,
  \]
  where the final step applies \cref{prop:kp-at-least-degree}. This yields the first claim.
  If in addition $k_p(\lambda) = \deg(\lambda)$ and is achievable, then \cref{prop:lower-bound} gives a matching lower bound.
\end{proof}

Let us consider the case when $\lambda$ is an algebraic integer of degree $2$. Furthermore suppose that $k_p(\lambda) = 2$ and can be achieved by a signed graph $G^\pm$. Note that both $\lambda$ and its conjugate element $\lambda'$ must have multiplicity $\abs{G^\pm}/2$ as the eigenvalues of $G^\pm$. Because the trace of $A_{G^\pm}$ is $0$, we know that $\lambda + \lambda' = 0$. Therefore $\lambda = \sqrt{n}$ for some $n \in \N$ and $A_{G^\pm}^2 = nI$. It is natural to consider a signed $n$-dimensional hypercube $H^\pm_n$ used by Huang's recent spectacular proof of the sensitivity conjecture \cite[Lemma~2.2]{H19}, in which every square of $H^\pm_n$ contains $1$ or $3$ positive edges.

\begin{figure}
  \centering
  \begin{minipage}[t]{0.5\textwidth}
    \centering
    \begin{tikzpicture}[very thick]
      \draw (0,0) -- (1,0);
      \draw[dashed] (0,0) -- (0,1) -- (1,1) -- (1,0);
      \draw[fill] (0,0) circle[radius=.075];
      \draw[fill] (1,0) circle[radius=.075];
      \draw[fill] (0,1) circle[radius=.075];
      \draw[fill] (1,1) circle[radius=.075];

      \node at (0,-0.25) {};
    \end{tikzpicture}
    \captionof{figure}{$H_2^\pm$} \label{fig:signed-h2}
  \end{minipage}%
  \begin{minipage}[t]{0.5\textwidth}
    \centering
    \begin{tikzpicture}[very thick]
      \coordinate (v) at (2,-1);

      \draw[dashed] (0,0) -- ++(v);
      \draw[dashed] (0,1) -- ++(v);
      \draw[dashed] (1,0) -- ++(v);
      \draw (1,1) -- ++(v);
      \draw (0.5,0.5) -- ++(v);
      \draw[dashed] (1.5,0.5) -- ++(v);
      \draw[dashed] (0.5,1.5) -- ++(v);
      \draw[dashed] (1.5,1.5) -- ++(v);

      \draw (0,0) -- (1,0);
      \draw (0,1) -- (0.5,1.5);
      \draw (1.5,0.5) -- (1.5,1.5);
      \draw[dashed] (0,0) -- (0,1) -- (1,1) -- (1,0) -- (1.5,0.5) -- (0.5,0.5) -- cycle;
      \draw[dashed] (0.5,0.5) -- (0.5,1.5) -- (1.5,1.5) -- (1,1);
      \draw[fill] (0,0) circle[radius=.075];
      \draw[fill] (1,0) circle[radius=.075];
      \draw[fill] (0,1) circle[radius=.075];
      \draw[fill] (1,1) circle[radius=.075];
      \draw[fill] (0.5,0.5) circle[radius=.075];
      \draw[fill] (1.5,0.5) circle[radius=.075];
      \draw[fill] (0.5,1.5) circle[radius=.075];
      \draw[fill] (1.5,1.5) circle[radius=.075];

      \begin{scope}[shift={(v)}]
        \draw (0,0) -- (0,1);
        \draw (1,0) -- (1.5,0.5);
        \draw (1.5,1.5) -- (0.5,1.5);
        \draw[dashed] (0,1) -- (1,1) -- (1,0) -- (0,0) -- (0.5,0.5) -- (0.5,1.5) -- cycle;
        \draw[dashed] (1,1) -- (1.5,1.5) -- (1.5,0.5) -- (0.5,0.5);
        \draw[fill] (0,0) circle[radius=.075];
        \draw[fill] (1,0) circle[radius=.075];
        \draw[fill] (0,1) circle[radius=.075];
        \draw[fill] (1,1) circle[radius=.075];
        \draw[fill] (0.5,0.5) circle[radius=.075];
        \draw[fill] (1.5,0.5) circle[radius=.075];
        \draw[fill] (0.5,1.5) circle[radius=.075];
        \draw[fill] (1.5,1.5) circle[radius=.075];
      \end{scope}
    \end{tikzpicture}
    \captionof{figure}{$H_4^\pm$} \label{fig:signed-h4}
  \end{minipage}
\end{figure}

\begin{proof}[Proof of \cref{thm:main}\ref{thm:main-lambda-2-3}]
  For $\lambda \in \set{\sqrt{2},\sqrt{3}}$ and $p \ge \lambda^2+1$, from \cref{prop:kp-at-least-degree} we know $k_p(\lambda) \ge 2$. In view of \cref{prop:deg-upper-bound} it suffices to prove that $2$ can be achieved for $k_p(\lambda)$. Consider the signed square $H_2^\pm$ in \cref{fig:signed-h2} and the signed cube $H_3^\pm$ in \cref{fig:signed-h3}. In either signed graph, every square contains one positive edge and three negative edges. As a consequence $$A_{H_n^\pm}^2 = nI, \quad\text{for }n=2 \text{ and }3,$$ which implies that the largest eigenvalue of $H_n^\pm$ is $\sqrt{n}$ with multiplicity $2^{n-1}$. It is easy to check that $\chi(H_2^\pm) = 3$ and $\chi(H_3^\pm) = 4$. Thus $k_p(\sqrt{2}) = 2$ for $p \ge 3$, $k_p(\sqrt{3}) = 2$ for $p \ge 4$, and all of them are achievable.
\end{proof}

\begin{remark}
  The constructions $H_2^\pm$ and $H_3^\pm$ in \cref{fig:signed-h3,fig:signed-h2} do not generalize for $\lambda = \sqrt{n}$ with $n \ge 5$ due to the additional constraint on the chromatic number. Suppose that $H^\pm$ is a signed $n$-dimensional hypercube such that $A_{H^\pm}^2 = nI$ and $\chi(H^\pm) < \infty$. Because $A_{H^\pm}^2 = nI$, every square of $H^\pm$ contains odd number of negative edges. Because $\chi(H^\pm) < \infty$, $H^\pm$ has no cycle with exactly one negative edge, and in particular every square of $H^\pm$ contains exactly one positive edge. Unfortunately, this puts a great restriction on $n$. On the one hand, because every positive edge is contained in $n-1$ squares, and each of the $2^{n-2}\binom{n}{2}$ squares in $H^\pm$ contains a positive edge, the number of positive edges is at least $2^{n-2}\binom{n}{2}/(n-1) = n2^{n-3}$. On the other hand, because the positive edges form a matching, there are at most $2^{n-1}$ of them. Therefore $n2^{n-3} \le 2^{n-1}$ and so $n \le 4$. In fact, in addition to $H_2^\pm$ and $H_3^\pm$, the signed $4$-dimensional hypercube $H_4^\pm$ in \cref{fig:signed-h4} satisfies $A_{H_4^\pm}^2 = 4I$ and $\chi(H_4^\pm) = 4$.
\end{remark}

When $k(\lambda) = \deg(\lambda)$, the next result determines $k_p(\lambda)$ for all $p \in \N$. One can then derive the corresponding $N_{\alpha,\beta}(d)$ from \cref{prop:deg-upper-bound}. Note that $k(\lambda) = \deg(\lambda)$ if and only if there exists a graph with spectral radius $\lambda$ whose characteristic polynomial is irreducible. A result of Mowshowitz~\cite{M71} states that such a graph must be asymmetric\footnote{An \emph{asymmetric graph} is a graph for which there are no automorphisms other than the trivial one.}. Asymmetric graphs have at least $6$ vertices. There are $8$ such graphs on $6$ vertices~\cite{ER63}. Among these $8$ asymmetric graphs on $6$ vertices, exactly $7$ of them have irreducible characteristic polynomials,\footnote{It was asserted in \cite[Section~4]{JP20} that all $8$ asymmetric graphs on $6$ vertices have irreducible characteristic polynomials. However the characteristic polynomial of the asymmetric graph
\raisebox{-2pt}{\begin{tikzpicture}[scale=0.2, thick]
  \draw (0,0) -- (6,0);
  \draw (0,0) -- (1,1) -- (2,0) -- (3,1) -- (4,0);
  \draw (1,1) -- (3,1);
  \draw[fill] (0, 0) circle[radius=.1];
  \draw[fill] (1, 1) circle[radius=.1];
  \draw[fill] (2, 0) circle[radius=.1];
  \draw[fill] (3, 1) circle[radius=.1];
  \draw[fill] (4, 0) circle[radius=.1];
  \draw[fill] (6, 0) circle[radius=.1];
\end{tikzpicture}}
is $x(x^5 - 8x^3 - 6x^2 + 8x + 6)$.}
hence their spectral radii satisfy $k(\lambda) = \deg(\lambda)$.

\begin{proposition}
  If $\lambda$ is an algebraic integer and $k(\lambda) = \deg(\lambda)$, then $k_p(\lambda) = \deg(\lambda)$ and is achievable for all $p \in \N$.
\end{proposition}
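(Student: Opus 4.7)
The plan is short: combine the universal lower bound $k_p(\lambda) \ge \deg(\lambda)$ from \cref{prop:kp-at-least-degree} with an explicit signed graph witnessing the matching upper bound, using the same witness for every $p$.

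First, since $k(\lambda) = \deg(\lambda)$, let $G$ be an unsigned graph on exactly $\deg(\lambda)$ vertices with $\lambda_1(G) = \lambda$. I would observe that because the adjacency matrix $A_G$ has integer entries, all Galois conjugates of $\lambda$ occur as eigenvalues of $G$ with the same multiplicity. The $\deg(\lambda)$ conjugates already exhaust the $\deg(\lambda)$ eigenvalues of $G$ (counted with multiplicity), so the characteristic polynomial of $G$ must coincide, up to sign, with the minimal polynomial of $\lambda$ over $\mathbb{Q}$. In particular this polynomial is irreducible, which forces $G$ to be connected (otherwise the characteristic polynomial factors as the product of those of its components), and forces $\mlt(\lambda, G) = 1$.

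Next I would view $G$ as a signed graph $G^\pm$ by declaring every edge positive. Because $G$ is connected, every valid coloring of $G^\pm$ must assign all vertices the same color, so $\chi(G^\pm) = 1 \le p$ for every $p \in \mathbb{N}$. Since the signed adjacency matrix of $G^\pm$ equals $A_G$, we have $\lambda_1(G^\pm) = \lambda$ and $\mlt(\lambda, G^\pm) = 1$, so
\[
\frac{\abs{G^\pm}}{\mlt(\lambda, G^\pm)} = \deg(\lambda).
\]
This exhibits $k_p(\lambda) \le \deg(\lambda)$ and shows the infimum in the definition of $k_p(\lambda)$ is attained. Combined with \cref{prop:kp-at-least-degree}, we conclude $k_p(\lambda) = \deg(\lambda)$ is achievable for all $p \in \mathbb{N}$.

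There is no real obstacle; the only substantive point is the Galois-theoretic observation that a graph realizing $k(\lambda) = \deg(\lambda)$ must have the minimal polynomial of $\lambda$ as its characteristic polynomial, after which the all-positive signed graph built from $G$ simultaneously witnesses achievability for every value of $p$.
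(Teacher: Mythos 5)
Your proof is correct and uses the same witness as the paper: the all-positive signing of a minimal graph $G$ realizing $k(\lambda)$, which has chromatic number $1 \le p$ and hence certifies $k_p(\lambda) \le \deg(\lambda)$; combined with \cref{prop:kp-at-least-degree} this gives the result. The only difference is a micro-step: the paper deduces $\mlt(\lambda,G)=1$ from the earlier observation $k_p(\lambda)\le k_1(\lambda)=k(\lambda)$ (connectivity of the minimal graph plus Perron--Frobenius), whereas you derive it via the Galois-theoretic argument that the characteristic polynomial of $G$ must equal the minimal polynomial of $\lambda$; both are valid and amount to the same thing.
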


\begin{proof}
  Clearly $k_p(\lambda) \le k_1(\lambda) = k(\lambda)$. Together with \cref{prop:kp-at-least-degree}, we know that $\deg(\lambda) \le k_p(\lambda) \le k(\lambda)$. Thus if $k(\lambda) = \deg(\lambda)$, then $\deg(\lambda) = k_p(\lambda) = k(\lambda)$, and furthermore $k(\lambda)$ can be achieved for $k_p(\lambda)$ by the smallest graph whose spectral radius is exactly $\lambda$.
\end{proof}

\begin{corollary}
  For $-1 \le \beta < 0 \le \alpha < 1$, set $\lambda = (1-\alpha)/(\alpha-\beta)$ and $p = \floor{-\alpha/\beta}+1$. If $\lambda$ is an algebraic integer and $k(\lambda) = \deg(\lambda)$, then
  \[
    \pushQED{\qed}
    N_{\alpha,\beta}(d) = \frac{\deg(\lambda)d}{\deg(\lambda)-1} + O_{\alpha,\beta}(1). \qedhere
    \popQED
  \]
\end{corollary}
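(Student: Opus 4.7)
The plan is to read off the corollary by combining the immediately preceding proposition with \cref{prop:deg-upper-bound}; essentially no new work is required beyond those two results. First I would observe that the hypothesis $k(\lambda) = \deg(\lambda)$ forces $\deg(\lambda) \ge 2$. Indeed, a positive algebraic integer of degree one would be a positive rational integer, and then the complete graph $K_{\lambda+1}$ witnesses $k(\lambda) = \lambda + 1 \ge 2$, giving $k(\lambda) > 1 = \deg(\lambda)$, a contradiction. So the hypothesis of \cref{prop:deg-upper-bound} that $\deg(\lambda) \ge 2$ is automatic.

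Next I would apply the preceding proposition, which asserts that $k(\lambda) = \deg(\lambda)$ implies $k_p(\lambda) = \deg(\lambda)$ for every $p \in \N$, and moreover that this value is achievable (the witness is the same minimal graph with irreducible characteristic polynomial realizing $k(\lambda)$, viewed as a signed graph with only positive edges, which has chromatic number at most $\abs{G} = k(\lambda) < \infty$, hence trivially satisfies any $\chi(G^\pm) \le p$ constraint for $p$ large enough — and small $p$ is handled by $k_p(\lambda) \le k_1(\lambda) = k(\lambda)$). Plugging $k_p(\lambda) = \deg(\lambda)$ back into \cref{prop:deg-upper-bound} then yields
\[
N_{\alpha,\beta}(d) = \frac{\deg(\lambda)\, d}{\deg(\lambda) - 1} + O_{\alpha,\beta}(1),
\]
which is the claimed formula. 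Since every nontrivial step has already been carried out in the proposition and in \cref{prop:deg-upper-bound} (whose proof in turn combined \cref{lem:code-graph}, the algebraic-degree bound from \cref{prop:kp-at-least-degree}, and the lower bound \cref{prop:lower-bound}), there is no real obstacle to overcome here — the only subtlety worth mentioning explicitly is the verification that $\deg(\lambda) \ge 2$ so that the denominator $\deg(\lambda) - 1$ in \cref{prop:deg-upper-bound} is positive and the proposition applies.
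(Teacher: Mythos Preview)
Your proposal is correct and matches the paper's intended argument: the corollary is stated without proof there, being an immediate consequence of the preceding proposition (that $k(\lambda)=\deg(\lambda)$ forces $k_p(\lambda)=\deg(\lambda)$ and is achievable) together with \cref{prop:deg-upper-bound}. Your explicit check that $\deg(\lambda)\ge 2$ is a nice addition that the paper leaves implicit; the only minor slip is in your parenthetical, where an all-positive signed graph in fact has chromatic number $1$ (not $\lvert G\rvert$), but this is harmless since you correctly invoke $k_p(\lambda)\le k_1(\lambda)=k(\lambda)$ anyway.
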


\section{Signed graphs with large eigenvalue multiplicities} \label{sec:large-mult}

In contrast to \cref{thm:kth-ev-mult}, there exist connected signed graphs with bounded maximum degree and chromatic number and linear largest eigenvalue multiplicity. In this section, we show two such constructions. These constructions illustrate an important obstacle to proving \cref{conj:main} following the current framework introduced in \cite{JTYZZ21}.

\begin{example}
  Let $n \geq 3$. Let $G_n^\pm$ be the signed graph consisting of 
  (see \cref{fig:signed-g6} for an illustration of $G_6^\pm$)
  \begin{enumerate}
    \item a positive $n$-cycle on $v_1, v_2, \dots, v_n$,
    \item $n$ copies of a signed $K_5$ with $3$ positive edges forming a $K_3$, and
    \item for each $i \in \set{1, \dots, n}$, a positive edge connecting $v_i$ and $u_i^+$, a negative edge connecting $v_i$ and $u_i^-$, where $u_i^+$ and $u_i^-$ are the two vertices outside the positive $K_3$ in the $i$-th copy of $K_5$.
  \end{enumerate}
\end{example}

\begin{figure}
  \centering
  \begin{minipage}[t]{0.5\textwidth}
    \centering
    \begin{tikzpicture}[scale=0.6, very thick]
      \foreach \r in {45,105,165,225,285,345} {
        \draw[rotate=\r] (1,1) -- (105:{sqrt(2)} );
        \draw[dashed, rotate=\r] (1, 1) -- (1, 2) -- (2, 1) -- (3, 2) -- (1, 2) -- (2, 3) -- (2, 1) -- (3, 3) -- (1, 2);
        \draw[rotate=\r] (3, 2) -- (3, 3) -- (2, 3) -- cycle;
        \draw[rotate=\r] (1, 1) -- (2, 1);
      }
      \foreach \r in {45,105,165,225,285,345} {
        \draw[fill, rotate=\r] (1, 1) circle[radius=.075];
        \draw[fill, rotate=\r] (2, 1) circle[radius=.075];
        \draw[fill, rotate=\r] (1, 2) circle[radius=.075];
        \draw[fill, rotate=\r] (3, 2) circle[radius=.075];
        \draw[fill, rotate=\r] (2, 3) circle[radius=.075];
        \draw[fill, rotate=\r] (3, 3) circle[radius=.075];
      }
    \end{tikzpicture}
    \captionof{figure}{$G_6^\pm$} \label{fig:signed-g6}
  \end{minipage}%
  \begin{minipage}[t]{0.5\textwidth}
    \centering
    \begin{tikzpicture}[scale=0.6, very thick]
      \foreach \r in {45,105,165,225,285,345} {
        \draw[rotate=\r] (1,1) -- (105:{sqrt(2});
        \draw[rotate=\r] (2.5,3.5) -- (2,1) -- (1,1) -- (1,2) -- (3.5,2.5) -- cycle;
        \draw[rotate=\r] (2.5,3.5) -- (2.75,1.75) -- (1.75,2.75) -- (3.5,2.5);
        \draw[rotate=\r] (1.75,2.75) -- (2,1) -- (1,2) -- (2.75,1.75);
      }
      \foreach \r in {45,105,165,225,285,345} {
        \draw[fill, rotate=\r] (1, 1) circle[radius=.075];
        \draw[fill, rotate=\r] (2, 1) circle[radius=.075];
        \draw[fill, rotate=\r] (1, 2) circle[radius=.075];
        \draw[fill, rotate=\r] (1.75, 2.75) circle[radius=.075];
        \draw[fill, rotate=\r] (2.75, 1.75) circle[radius=.075];
        \draw[fill, rotate=\r] (2.5, 3.5) circle[radius=.075];
        \draw[fill, rotate=\r] (3.5, 2.5) circle[radius=.075];
      }
    \end{tikzpicture}
    \captionof{figure}{$H_6$} \label{fig:unsigned-h6}
  \end{minipage}
\end{figure}

So $G_n^\pm$ is a signed graph on $6n$ vertices of maximum degree $5$ and chromatic number $3$. However the multiplicity of its largest eigenvalue is linear in $\abs{G_n^\pm}$. \cref{thm:large-mult-exmaple} is an immediate consequence of the following result.

\begin{proposition}\label{prop:lin-mult}
  The largest eigenvalue of $G_n^\pm$ is $(\sqrt{33} + 1)/2$ with multiplicity $n$.
\end{proposition}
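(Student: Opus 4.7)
The plan is to block-diagonalize the signed adjacency matrix $A_{G_n^\pm}$ by exploiting two symmetries: the cyclic $\mathbb{Z}/n$ action rotating the central cycle (and permuting gadgets), and, within each gadget, the $S_3$ action permuting the three vertices $a_i, b_i, c_i$ of the positive triangle. For the $S_3$ step, the $3$-dimensional triangle space splits as trivial plus standard representation. On the $2$-dimensional standard rep at each $i$ (vectors $x$ with $x(a_i) + x(b_i) + x(c_i) = 0$), the positive-triangle contribution gives $-x$ on each triangle vertex, while the contributions to and from $u_i^\pm$ and $v_i$ vanish by the sum-zero condition. Hence $A_{G_n^\pm}$ acts as $-I$ on this subspace, contributing $2n$ eigenvalues equal to $-1$.

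I then restrict to the $S_3$-trivial subspace, where $x(a_i) = x(b_i) = x(c_i) =: A_i$. Applying the discrete Fourier transform with respect to the $\mathbb{Z}/n$ symmetry, I look for eigenvectors of the form $(A_i, \delta_i, \varepsilon_i, v_i) = (A, D, E, V)\omega^i$ for each $n$-th root of unity $\omega$. Writing down the eigenvalue equations at $a_i$, $u_i^+$, $u_i^-$, and $v_i$ yields a $4 \times 4$ system in $(A, D, E, V)$ parametrized by $\Omega := \omega + \omega^{-1} \in [-2, 2]$:
\begin{align*}
(2 - \lambda) A - D - E &= 0, \\
-3 A - \lambda D - E + V &= 0, \\
-3 A - D - \lambda E - V &= 0, \\
D - E + (\Omega - \lambda) V &= 0.
\end{align*}

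The main computation is to show this system has a nontrivial solution if and only if
\[
(\lambda^2 - \lambda - 8)\bigl(\lambda^2 - (\Omega + 1)\lambda + (\Omega - 2)\bigr) = 0.
\]
Adding equations $2$ and $3$ and combining with equation $1$ gives $[(2 - \lambda)(\lambda + 1) + 6] A = 0$, forcing $A = 0$ or $\lambda^2 - \lambda - 8 = 0$. Subtracting equation $3$ from $2$ and combining with equation $4$ gives $[(\lambda - 1)(\lambda - \Omega) - 2] V = 0$, forcing $V = 0$ or $(\lambda - 1)(\lambda - \Omega) = 2$. A short case analysis on whether $A$ and $V$ vanish then shows each $4 \times 4$ block contributes exactly $4$ eigenvalues; together with the $2n$ copies of $-1$ above, these account for all $6n$ eigenvalues of $G_n^\pm$.

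To conclude, $(1 + \sqrt{33})/2$ is the larger root of $\lambda^2 - \lambda - 8$, while the larger root of $\lambda^2 - (\Omega + 1)\lambda + (\Omega - 2)$ equals $\bigl((\Omega + 1) + \sqrt{(\Omega - 1)^2 + 8}\bigr)/2$, which is increasing in $\Omega$ and takes value $3 < (1+\sqrt{33})/2$ at $\Omega = 2$. Moreover, $(1 + \sqrt{33})/2$ cannot be a root of the second factor for any $\Omega \in [-2, 2]$, since substitution forces $\Omega = 3(1 + \sqrt{33})/8 > 2$. Hence $(1 + \sqrt{33})/2$ is contributed exactly once per $\omega$, giving total multiplicity $n$. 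The bulk of the work is the $4 \times 4$ factorization, which is elementary once the symmetry decomposition is in place; the conceptual step is recognizing the two symmetry groups and the locally supported eigenvectors corresponding to the $V = 0$ branch.
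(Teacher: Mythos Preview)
Your approach is correct but genuinely different from the paper's. The paper exhibits the $n$ top eigenvectors directly: the top eigenvector of each signed $K_5^\pm$ assigns equal values to $u_i^+$ and $u_i^-$, so extending it by zeros to $V(G_n^\pm)$ still gives an eigenvector (the contributions at $v_i$ cancel). This immediately produces $n$ linearly independent eigenvectors for $(1+\sqrt{33})/2$. For the upper bound, the paper does \emph{not} compute the remaining spectrum; instead it uses a Rayleigh-quotient argument, decomposing any $\bm{x}\perp\bm{x}_1,\dots,\bm{x}_n$ as $\bm{y}+\bm{z}$ according to the symmetric/antisymmetric parts in $u_i^\pm$, and bounds $\bm{x}^\top A\bm{x}\le 3\bm{x}^\top\bm{x}$ using $\lambda_2(K_5^\pm)=1$ and the spectral radius of the induced subgraph on $U\cup V$. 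Your route instead fully block-diagonalizes via the $S_3\times\mathbb{Z}/n$ symmetry and computes every eigenvalue; this is more laborious but yields strictly more information. Note that your $V=0$ eigenvectors span exactly the same $n$-dimensional space as the paper's localized $\bm{x}_i$'s.

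One expository point worth tightening: your paragraph establishes that the $4\times4$ system is singular iff the product $(\lambda^2-\lambda-8)\bigl(\lambda^2-(\Omega+1)\lambda+(\Omega-2)\bigr)$ vanishes, i.e.\ that the root \emph{sets} agree. To conclude that $(1+\sqrt{33})/2$ is a \emph{simple} root of the characteristic polynomial you need the two degree-$4$ polynomials to coincide, not just share zeros. This follows immediately once you observe that your manipulations (adding and subtracting equations $2$ and $3$) are exactly the invertible change of variables $S=D+E$, $T=D-E$, which makes the $4\times4$ matrix block-diagonal with $2\times2$ blocks on $(A,S)$ and $(T,V)$ having characteristic polynomials $\lambda^2-\lambda-8$ and $\lambda^2-(\Omega+1)\lambda+(\Omega-2)$ respectively. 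With that remark added, the multiplicity claim is airtight.
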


\begin{proof}
  We denote by $K_5^\pm$ the signed $K_5$ with $3$ positive edges forming a $K_3$, and we compute the spectrum of $K_5^\pm$ to be $(1 - \sqrt{33})/2, -1, -1, 1, (1 + \sqrt{33})/2$. Because the largest eigenvalue $(\sqrt{33} + 1)/2$ is simple, by symmetry the corresponding eigenvector assigns the same value to $u_i^+$ and $u_i^-$. For the $i$-th copy of $K_5^\pm$ in $G_n^\pm$, we can extend its top eigenvector to a vector $\bm{x_i}$ on $V(G_n^\pm)$ by padding zeros. Since
  $$
    (A\bm{x_i})_{v_i} = (\bm{x_i})_{u_i^+} - (\bm{x_i})_{u_i^-} = 0,
  $$
  where $A$ denotes the signed adjacency matrix of $G^\pm_n$, the vector $\bm{x_i}$ is also an eigenvector of $G^\pm_n$ associated with the eigenvalue $(\sqrt{33} + 1)/2$.

  For every vector $\bm{x} \in \R^{V(G^\pm_n)}$ that is perpendicular to all $\bm{x_i}$, $1 \le i \le n$, we claim that $\bm{x}^\T A\bm{x} \le 3\bm{x}^\T\bm{x}$, and so all the eigenvalues other than the ones corresponding to $\bm{x_1}, \dots, \bm{x_n}$ are at most $3$. Take such a vector $\bm{x}$, and set $U = \set{u_1^+, u_1^-,\dots, u_n^+,u_n^-}$ and $V = \set{v_1, \dots, v_n}$. We take the orthogonal decomposition $\bm{x} = \bm{y} + \bm{z}$ such that $\bm{y}$ and $\bm{z}$ are supported respectively on $V(G^\pm_n) \setminus V$ and $U \cup V$. In particular, for every $i \in \set{1, \dots, n}$,
  \[
    (\bm{y})_{u_i^+} = (\bm{y})_{u_i^-} = \tfrac{1}{2}\left(\bm{x}_{u_i^+} + \bm{x}_{u_i^-}\right) \quad\text{and}\quad (\bm{z})_{u_i^+} = -(\bm{z})_{u_i^-} = \tfrac{1}{2}\left(\bm{x}_{u_i^+} - \bm{x}_{u_i^-}\right).
  \]
  One can check that $\bm{y}^\T A\bm{z} = 0$. We can simplify
  \[
    \bm{x}^\T A\bm{x} = (\bm{y} + \bm{z})^\T A(\bm{y} + \bm{z}) = \bm{y}^\T A\bm{y} + \bm{z}^\T A \bm{z}.
  \]

  Since $\bm{x}$ and $\bm{z}$ are both orthogonal to each $\bm{x_i}$, so is $\bm{y} = \bm{x}-\bm{z}$. By the Courant--Fischer theorem, we obtain $\bm{y}^\T A\bm{y} \le \lambda_2(K_5^\pm)\bm{y}^\T \bm{y} = \bm{y}^\T \bm{y}$. As $\bm{z}$ is supported on $U \cup V$, we bound $\bm{z}^\T A\bm{z}$ by bounding the spectral radius of $G_n^\pm[U \cup V]$. Since the chromatic number of $G_n^\pm[U \cup V]$ is $2$, the induced signed subgraph shares the same spectral radius with its underlying graph, denoted $H$, on $U \cup V$. Notice that the vector that assigns $1$ to $U$ and $2$ to $V$ is an eigenvector of $H$ with positive components associated with the eigenvalue $3$. By the Perron--Frobenius theorem, the spectral radius of $H$ is~$3$. Thus $\bm{z}^\T A\bm{z} \le 3\bm{z}^\T\bm{z}$. Recall that $\bm{x} = \bm{y} + \bm{z}$ is an orthogonal decomposition. Thus
  \begin{equation*}
    \bm{x}A\bm{x} = \bm{y}^\T A\bm{y} + \bm{z}^\T A \bm{z} \le \bm{y}^\T \bm{y} + 3\bm{z}^\T \bm{z} \le 3(\bm{y}^\T \bm{y} + \bm{z}^\T \bm{z}) = 3\bm{x}^\T\bm{x}. \qedhere
  \end{equation*}
\end{proof}

Even if we restrict the signed graph $G^\pm$ to be all-negative, its largest eigenvalue multiplicity could still be linear in $\abs{G^\pm}$. It suffices to construct the underlying graph $G$ with bounded maximum degree whose smallest eigenvalue multiplicity is linear in $\abs{G}$.

\begin{example}
  Let $n \geq 3$.
  Let $H_n$ be the (unsigned) graph consisting of 
  (see \cref{fig:unsigned-h6} for an illustration of $H_6$)
  \begin{enumerate}
    \item an $n$-cycle on $v_1, v_2, \dots, v_n$,
    \item $n$ copies of $K_{3,3}$, and
    \item for each $i \in \set{1, \dots, n}$, two edges connecting $v_i$ to $u_i^1$ and $u_i^2$, where $u_i^1$ and $u_i^2$ are two adjacent vertices in the $i$-th copy of $K_{3,3}$.
  \end{enumerate}
\end{example}

So $H_n$ is a graph on $7n$ vertices of maximum degree $4$. Moreover, since the chromatic number of $H_n$ is $3$, the corresponding all-negative signed graph has the same chromatic number.

\begin{proposition}
  The smallest eigenvalue of $H_n$ is $-3$ with multiplicity $n$.
\end{proposition}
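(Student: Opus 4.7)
The plan breaks into two parts: first exhibit $n$ explicit eigenvectors of $A(H_n)$ with eigenvalue $-3$, then prove $A(H_n)+3I\succeq 0$ with kernel of dimension exactly $n$ via a Schur-complement calculation, mirroring the structure of \cref{prop:lin-mult}.

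For the lower bound on multiplicity, recall that $K_{3,3}$ has $-3$ as a simple smallest eigenvalue, realized by the bipartition-signed vector with $+1$ on one side and $-1$ on the other. For each $i$, I would define $\bm{x}_i\in\R^{V(H_n)}$ to assign $+1$ to the three vertices on the side of the $i$-th $K_{3,3}$ containing $u_i^1$, $-1$ to the three vertices on the side containing $u_i^2$, and $0$ elsewhere. At every vertex inside the $i$-th $K_{3,3}$ this reduces to the $K_{3,3}$ computation, so the only extra check is at $v_i$, where its two $U$-neighbors contribute $\bm{x}_i(u_i^1)+\bm{x}_i(u_i^2)=+1-1=0=-3\cdot\bm{x}_i(v_i)$. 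These $n$ vectors have pairwise disjoint supports, hence are linearly independent, so $\mathrm{mult}(-3,H_n)\ge n$.

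For the matching upper bound, partition $V(H_n)=V\sqcup U$ with $V=\{v_1,\ldots,v_n\}$ and write
\[
A(H_n)+3I=\begin{pmatrix} A_{VV}+3I & A_{VU} \\ A_{UV} & A_{UU}+3I\end{pmatrix},
\]
where $A_{VV}=A(C_n)$ and $A_{UU}+3I$ is block-diagonal with $n$ copies of $A(K_{3,3})+3I$. The pivotal observation is that $A_{VU}$ annihilates the entire kernel of $A_{UU}+3I$: this kernel is spanned by $\{\bm{x}_j|_U\}$, and $(A_{VU}\bm{x}_j|_U)_{v_i}$ equals $\bm{x}_j(u_i^1)+\bm{x}_j(u_i^2)$, which vanishes in every case (either both entries are zero or they cancel). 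Writing $y_U=y_U^{(0)}+y_U^{(1)}$ with $y_U^{(0)}\in\ker(A_{UU}+3I)$, this decoupling eliminates $y_U^{(0)}$ from the quadratic form $y^\T(A+3I)y$ entirely.

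It then remains to show that the residual form is strictly positive whenever $(y_V,y_U^{(1)})\ne 0$. Using $\lambda_{\min}(C_n)\ge -2$ gives $A_{VV}+3I\succeq I$, and the fact that the second smallest eigenvalue of $K_{3,3}$ is $0$ gives $A_{UU}+3I\succeq 3I$ on its range, so the residual form dominates $\|y_V\|^2+2y_V^\T A_{VU}y_U^{(1)}+3\|y_U^{(1)}\|^2$. This auxiliary block matrix has Schur complement (in its $3I$-block) equal to $I-\tfrac{1}{3}A_{VU}A_{UV}=\tfrac{1}{3}I$, using the identity $A_{VU}A_{UV}=2I_V$ (each $v_i$ has exactly two $U$-neighbors, lying in disjoint $K_{3,3}$-blocks for distinct $i$). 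This Schur complement is positive definite, so the form is strictly positive unless $y_V=0$ and $y_U^{(1)}=0$, that is, $y\in\mathrm{span}(\bm{x}_1,\ldots,\bm{x}_n)$. The main obstacle is handling the interaction between the cycle and the $K_{3,3}$ blocks; the miracle enabling the argument is the annihilation of $\ker(A_{UU}+3I)$ by $A_{VU}$, combined with the favorable numerical interplay of the spectral gaps (size $1$ in $A_{VV}+3I$, size $3$ in $A_{UU}+3I$ on its range) against the incidence $A_{VU}A_{UV}=2I$.
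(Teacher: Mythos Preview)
Your argument is correct, and the lower-bound half (the $n$ explicit $-3$-eigenvectors coming from the bipartition vectors of each $K_{3,3}$, with cancellation at $v_i$ because $u_i^1,u_i^2$ lie in opposite parts) is exactly what the paper does.

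For the upper bound you take a genuinely different route. The paper decomposes any $\bm{x}\perp\{\bm{x}_i\}$ by \emph{support} into a $U$-part $\bm{y}$ and a $V$-part $\bm{z}$, uses Courant--Fischer on $\bm{y}$ against the second smallest eigenvalue of $K_{3,3}$ to get $\bm{y}^\T A\bm{y}\ge 0$, and then recognizes $2\bm{y}^\T A\bm{z}+\bm{z}^\T A\bm{z}$ as the quadratic form of an auxiliary graph $\bar H_n$ (the cycle with the $2n$ pendant vertices $u_i^1,u_i^2$), whose spectral radius it pins down as $1+\sqrt{3}$ via an explicit Perron eigenvector. This yields the stronger conclusion that every eigenvalue other than $-3$ is at least $-(1+\sqrt{3})$. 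You instead decompose the $U$-part \emph{spectrally} into $\ker(A_{UU}+3I)$ and its complement, observe that $A_{VU}$ kills the kernel, and finish with a Schur-complement computation using the numerical inputs $\lambda_{\min}(C_n)\ge -2$, the $K_{3,3}$ spectral gap $0-(-3)=3$, and $A_{VU}A_{UV}=2I$. Your approach is cleaner linear algebra and avoids identifying the Perron vector of $\bar H_n$, at the cost of not producing the sharper gap estimate; the paper's approach is more graph-theoretic and extracts a quantitative separation between $-3$ and the rest of the spectrum.
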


\begin{proof}
  We compute the spectrum of $K_{3,3}$ to be $3, 0, 0, 0, 0, -3$. For the $i$-th copy of $K_{3,3}$, we can extend the eigenvector associated with its smallest eigenvalue $-3$ to an eigenvector $\bm{x_i}$ on $V(H_n)$ by padding zeros. To prove that all the eigenvalues other than the ones corresponding to $\bm{x_1}, \dots, \bm{x_n}$ are at least $-(1+\sqrt{3})$, it suffices to show that $\bm{x}^\T A\bm{x} \ge -(1+\sqrt{3})\bm{x}^\T\bm{x}$ for every vector $\bm{x} \in \R^{V(G^\pm_n)}$ that is perpendicular to all $\bm{x_i}$, $1 \le i \le n$. Take such a vector $\bm{x}$ and take the orthogonal decomposition $\bm{x} = \bm{y} + \bm{z}$ such that $\bm{y}$ and $\bm{z}$ are supported respectively on $V(H_n)\setminus V$ and $V$, where $V = \set{v_1, \dots, v_n}$. Because $\bm{x}$ and $\bm{z}$ are orthogonal to each $\bm{x_i}$, so is $\bm{y} = \bm{x} - \bm{z}$. By the Courant--Fischer theorem, we obtain $\bm{y}^\T A\bm{y} \ge \lambda_5(K_{3,3})\bm{y}^\T\bm{y} = 0$. We can simplify
  \begin{equation} \label{eqn:gn-estimate}
    \bm{x}^\T A\bm{x} = (\bm{y} + \bm{z})^\T A(\bm{y} + \bm{z}) \ge 2\bm{y}^\T A \bm{z} + \bm{z}^\T A \bm{z},
  \end{equation}
  where $A$ denotes the adjacency matrix of $H_n$. Let $\bar{H}$ be the connected graph consisting of the $n$-cycle on $v_1, \dots, v_n$ and two edges connecting $v_i$ to $u_i^1$ and $u_i^2$ for each $i \in \set{1, \dots, n}$. Let $\bm{\bar{x}}$ be the restriction of $\bm{x}$ on $V(\bar{H})$. Then the right hand side of \cref{eqn:gn-estimate} is equal to $\bm{\bar{x}}^\T\bar{A}\bm{\bar{x}}$, where $\bar{A}$ denotes the adjacency matrix of $\bar{H}$. Notice that the vector that assigns $1+\sqrt{3}$ to $v_i$ and $1$ to both $u_i^1$ and $u_i^2$ for every $i \in \set{1,\dots, n}$ is an eigenvector of $\bar{H}_n$ with positive components associated with the eigenvalue $1+\sqrt{3}$. By the Perron--Frobenius theorem, the spectral radius of $\bar{H}$ is $1+\sqrt{3}$. Thus
  \begin{equation*}
    \bm{x}^\T A\bm{x} \ge \bm{\bar{x}}^\T\bar{A}\bm{\bar{x}} \ge -(1+\sqrt{3})\bm{\bar{x}}^\T\bm{\bar{x}} \ge -(1+\sqrt{3})\bm{x}^\T\bm{x}. \qedhere
  \end{equation*}
\end{proof}

\medskip
\noindent\textbf{Acknowledgments.}
We thank Noga Alon and Colin Defant for discussions and ideas related to the constructions in \cref{sec:large-mult}.
We also thank the anonymous referee for detailed suggestions that significantly improved the exposition of the paper.


\begin{thebibliography}{10}

\bibitem{BDKS18}
Igor Balla, Felix Dr\"{a}xler, Peter Keevash, and Benny Sudakov,
  \emph{Equiangular lines and spherical codes in {E}uclidean space}, Invent.
  Math. \textbf{211} (2018), 179--212.

\bibitem{BY13}
Alexander Barg and Wei-Hsuan Yu, \emph{New bounds for spherical two-distance
  sets}, Exp. Math. \textbf{22} (2013), 187--194.

\bibitem{B16}
Boris Bukh, \emph{Bounds on equiangular lines and on related spherical codes},
  SIAM J. Discrete Math. \textbf{30} (2016), 549--554.

\bibitem{DGS77}
P.~Delsarte, J.~M. Goethals, and J.~J. Seidel, \emph{Spherical codes and
  designs}, Geometriae Dedicata \textbf{6} (1977), 363--388.

\bibitem{ER63}
P.~Erd\H{o}s and A.~R\'{e}nyi, \emph{Asymmetric graphs}, Acta Math. Acad. Sci.
  Hungar. \textbf{14} (1963), 295--315.

\bibitem{GY18}
Alexey Glazyrin and Wei-Hsuan Yu, \emph{Upper bounds for {$s$}-distance sets
  and equiangular lines}, Adv. Math. \textbf{330} (2018), 810--833.

\bibitem{H19}
Hao Huang, \emph{Induced subgraphs of hypercubes and a proof of the sensitivity
  conjecture}, Ann. of Math. \textbf{190} (2019), 949--955.

\bibitem{JP21}
Zilin Jiang and Alexandr Polyanskii, \emph{Forbidden induced subgraphs for
  graphs and signed graphs with eigenvalues bounded from below},
  arXiv:2111.10366.

\bibitem{JP20}
Zilin Jiang and Alexandr Polyanskii, \emph{Forbidden subgraphs for graphs of
  bounded spectral radius, with applications to equiangular lines}, Israel J.
  Math. \textbf{236} (2020), 393--421.

\bibitem{JTYZZ21}
Zilin Jiang, Jonathan Tidor, Yuan Yao, Shengtong Zhang, and Yufei Zhao,
  \emph{Equiangular lines with a fixed angle}, Ann. of Math. \textbf{194}
  (2021), 729--743.

\bibitem{LRS77}
D.~G. Larman, C.~A. Rogers, and J.~J. Seidel, \emph{On two-distance sets in
  {E}uclidean space}, Bull. London Math. Soc. \textbf{9} (1977), 261--267.

\bibitem{LS73}
P.~W.~H. Lemmens and J.~J. Seidel, \emph{Equiangular lines}, J. Algebra
  \textbf{24} (1973), 494--512.

\bibitem{M71}
Abbe Mowshowitz, \emph{Graphs, groups and matrices}, Proceedings of the
  {T}wenty-{F}ifth {S}ummer {M}eeting of the {C}anadian {M}athematical
  {C}ongress ({L}akehead {U}niv., {T}hunder {B}ay, {O}nt., 1971), 1971,
  pp.~509--522.

\bibitem{M09}
Oleg~R. Musin, \emph{Spherical two-distance sets}, J. Combin. Theory Ser. A
  \textbf{116} (2009), 988--995.

\bibitem{N81}
A.~Neumaier, \emph{Distance matrices, dimension, and conference graphs},
  Nederl. Akad. Wetensch. Indag. Math. \textbf{43} (1981), 385--391.

\bibitem{W75}
Richard~M. Wilson, \emph{An existence theory for pairwise balanced designs.
  {III}. {P}roof of the existence conjectures}, J. Combinatorial Theory Ser. A
  \textbf{18} (1975), 71--79.

\bibitem{Y17}
Wei-Hsuan Yu, \emph{New bounds for equiangular lines and spherical two-distance
  sets}, SIAM J. Discrete Math. \textbf{31} (2017), 908--917.

\end{thebibliography}

\end{document}